\def\b{\mathbb }
\def\phi{\varphi }
\def\epsilon{\varepsilon}
\theoremstyle{plain}
\newtheorem{theorem}{Theorem}[section]
\newtheorem{corollary}[theorem]{Corollary}
\newtheorem{lemma}[theorem]{Lemma}
\theoremstyle{definition}
\newtheorem{remark}[theorem]{Remark}
\numberwithin{equation}{section}
\begin{document}

\title{ Uniform oscillatory behavior of spherical functions of $GL_n/U_n$ at
  the identity and a central limit theorem}
\author{
Michael Voit\\
Fakult\"at Mathematik, Technische Universit\"at Dortmund\\
          Vogelpothsweg 87,
          D-44221 Dortmund, Germany\\
e-mail:  michael.voit@math.tu-dortmund.de}
\date{\today}

\maketitle

\begin{abstract} Let $\mathbb F=\mathbb R$ or $\mathbb C$ and $n\in\b N$.
 Let $(S_k)_{k\ge0}$ be a time-homogeneous random walk on $GL_n(\b F)$  associated with 
an $U_n(\b F)$-biinvariant measure $\nu\in M^1(GL_n(\b F))$. We derive a central limit theorem  for the ordered
  singular spectrum $\sigma_{sing}(S_k)$  with  a  normal distribution
 as limit with  explicit analytic formulas for the drift  vector and  the covariance matrix.
 The main ingredient for the proof will be a
oscillatory result for the spherical functions $\phi_{i\rho+\lambda}$ of
$(GL_n(\b F),U_n(\b F))$. More precisely, we present a necessarily unique
 mapping $m_{\bf 1}:G\to\b R^n$ such that for some  
 constant
$C$ and  all $g\in G$, $\lambda\in\b R^n$,
$$|\phi_{i\rho+\lambda}(g)- e^{i\lambda\cdot m_{\bf 1}(g)}|\le
C\|\lambda\|^2.$$

\end{abstract}

KEYWORDS: Biinvariant random walks on $GL(n,\mathbb R)$ and $GL(n,\mathbb C)$, asymptotics of spherical
functions,  central limit theorem for
the singular spectrum, random walks on the positive definite matrices, dispersion. 

Mathematics Subject Classification 2010: 43A90; 33C67; 22E46; 60B15; 60F05; 43A62.

\section{Introduction}

Let $\mathbb F=\mathbb R$ or $\mathbb C$, $n\ge2$ an integer,
 and $G:=GL(n,\mathbb F)$ the general linear group with maximal compact subgroup $K:=U_n(\mathbb F)$. 
 Consider  i.i.d.~$G$-valued random variables $(X_k)_{k\ge1}$ with the common 
$K$-biinvariant distribution $\nu\in M^1(G)$ and the associated 
 $G$-valued random walk $(S_k:= X_1 \cdot X_2\cdots X_k)_{k\ge0}$ with the convention  that $S_0$ is 
 the identity  $I_n$.  Moreover, let 
$$\sigma_{sing}(g)\in \{x=(x_1,\ldots,x_n)\in\mathbb R^n:\> x_1\ge x_2\ge\cdots\ge x_n>0\}$$
denote the singular (or Lyapunov)  spectrum of $g\in G$ where
 the singular values of $g$, i.e.,  square roots of the 
eigenvalues of $gg^*$, are ordered by size. Consider the mapping $\ln\sigma_{sing}$ from $G$
onto the Weyl chamber 
$$W_n:=\{x=(x_1,\ldots,x_n)\in\mathbb R^n:\> x_1\ge x_2\ge\cdots\ge x_n\},$$
with the logarithm 
$\ln(x_1,\ldots,x_n):=(\ln x_1,\ldots,\ln x_n)$.
 We show  that under a natural moment condition,
  the $\mathbb R^n$-valued random variables
\begin{equation}\label{normal}
\frac{1}{\sqrt k}(2\cdot \ln\sigma_{sing}(S_k)-k\cdot m_{\bf 1}(\nu))
\end{equation}
tend for $k\to\infty$  to some $n$-dimensional normal
distribution
 $N(0,\Sigma^2(\nu))$
 where the drift vector $ m_{\bf 1}(\nu)$ and the covariance matrix $\Sigma^2(\nu)$ 
are given explicitely depending on $\nu$. 

This central limit theorem (CLT) can be also seen as follows:
 By polar decomposition of  $g\in G$, the symmetric space $G/K$ can be identified with the cone 
$P_n(\mathbb F)$ of positive definite symmetric or hermitian $n\times n$ matrices via
$$gK\mapsto I(g):= gg^*\in P_n(\mathbb F) \quad\quad(g\in G),$$
where $G$ acts on $P_n(\mathbb F)$ via $a\mapsto gag^*$.  In this way,
 the double coset space $G//K$ can be identified with the Weyl chamber
 $W_n$ via 
$$KgK\mapsto \ln\sigma_{sing}(g)=\frac{1}{2}\ln \sigma( gg^*)$$
where $\sigma$ denotes the spectrum, i.e., the ordered eigenvalues, of a positive definite matrix. 
Therefore, the CLT above may be regarded as a CLT for the spectrum of $K$-invariant random walks on  
$P_n(\mathbb F)$. Such   CLTs  have a long history. CLTs where  $\nu$ is renormalized first into some 
 measure $\nu_k\in M^1(G)$, and then the convergence of the convolution powers $\nu_k^k$ 
 is studied, can be found  e.g. in
\cite{KTS}, \cite{Tu}, \cite{FH}, \cite{Te1}, \cite{Te2},
\cite{Ri}, \cite{G1}, and \cite{G2}. In this case,  so-called dispersions of $\nu$ appear
 as  parameters of the limits, where these dispersions are defined in terms
 of derivatives of the spherical functions of $(G,K)$. 
These dispersions will also appear in our CLT
 in order to describe  $m_{\bf 1}(\nu)$ and  $\Sigma^2(\nu)$. Our CLT is in principle well-known;
 see  Theorem 1 of \cite{Vi}, as well as 
the  CLTs of Le Page \cite{L} and  the monograph \cite{BL}. However, our approach, which directly leads to 
analytic formulas for  drift and covariance, seems to be new for $n>2$.
For  $n=2$, our CLT can be splitted into two one-dimensional parts, namely
 a classical part for the sum
 $\ln\det S_k=\sum_{l=1}^k\ln\det X_l $
 of i.i.d.~random variables, and a CLT   for  $(SL_2(\mathbb F),SU(2,\mathbb F))$.
 The associated spherical functions are the Jacobi functions $\phi_\lambda^{(0,0)}(t)$ and $\phi_\lambda^{(1/2,1/2)}(t)$
 depending on $\b F$ (see \cite{K} for details), and the CLT above for
 $(SL_2(\mathbb F),SU(2,\mathbb F))$ appears as a special case of a CLT of Zeuner \cite{Z}
  for  certain
 Sturm-Liouville hypergroups on $[0,\infty[$. 
The proof of Zeuner depends on some uniform estimate for  the oscillatory behavior of the associated
 multiplicative functions, i.e., the
Jacobi functions here.
This idea was later on transfered to certain random walks on the nonnegative integers
 associated with orthogonal polynomials in \cite{V1}. Moreover, the result of Zeuner 
 \cite{Z} was recently 
slightly improved for  Jacobi functions
in \cite{V2} by using the well-known Harish-Chandra integral representation of the 
 Jacobi functions from \cite{K}.  We here  adopt
 this approach and use the Harish-Chandra integral representation
of the spherical function of $(G,K)$ 
 to derive a uniform estimate for their oscillatory behavior.
 The CLT above then follows  easily.

Let us describe this uniform oscillatory result.
Recapitulate that a $K$-biinvariant continuous function $\phi\in C(G)$ on $G$ is called spherical iff 
$$\phi(g_1)\phi(g_2)=\int_K \phi(g_1kg_2)\> dk$$
 for all $g_1,g_2\in G$ where $dk$ is the normalized Haar measure on $K$.
 It is well-known (see \cite{H1} or \cite{Te2}) that all spherical functions of $(G,K)$ are given by the 
 Harish-Chandra integral
\begin{equation}\label{int-rep}
\phi_{i\rho+\lambda}(g)=\int_K \Delta_1^{i\lambda_1-i\lambda_2}(k^*gg^*k)\cdots
 \Delta_{n-1}^{i\lambda_{n-1}-i\lambda_n}(k^*gg^*k)\Delta_{n}^{i\lambda_n}(k^*gg^*k)\> dk
\end{equation}
where the $\Delta_r$ are the principal minors of order $r$, $\lambda\in\mathbb C^n$, and
 where $\rho=(\rho_1,\ldots,\rho_n)$ is the half sum of roots with
$\rho_l=\frac{d}{2}(n+1-2l)$ with the dimension $d=1,2$ of $\mathbb F$ over $\mathbb R$.
Notice that by (\ref{int-rep}), $\phi_{i\rho}\equiv 1$, and that for $\lambda\in\mathbb R^n$ and $g\in G$,
$|\phi_{i\rho+\lambda}(g)|\le1$.

We now follow the usual approach to the dispersion for $(G,K)$ (see \cite{FH},\cite{Te1}, \cite{Te2},
\cite{Ri}, \cite{G1}, \cite{G2}) and to so-called moment functions on hypergroups in \cite{Z}, \cite{V1}, and
 Section 7.2.2 of \cite{BH}:  For  multiindices $l=(l_1,\ldots,l_n)\in\mathbb N_0^n$ we define
the so
called  moment functions
\begin{align}\label{moment-function}
m_l(g):=&
\frac{\partial^{|l|}}{\partial\lambda^l}\phi_{i\rho-i\lambda}(g)
\Bigl|_{\lambda=0}:=\frac{\partial^{|l|}}{(\partial\lambda_1)^{l_1}\cdots(\partial\lambda_n)^{l_n}}
\phi_{i\rho-i\lambda}(g)
\Bigl|_{\lambda=0}
\notag\\
=&
\int_K (\ln\Delta_1(k^*gg^*k))^{l_1}\cdot\left(\ln\left(\frac{\Delta_2(k^*gg^*k)}{\Delta_1(k^*gg^*k)}\right)\right)^{l_2}
\cdots
\left(\ln\left(\frac{\Delta_n(k^*gg^*k)}{\Delta_{n-1}(k^*gg^*k)}\right)\right)^{l_n}\> dk
 \end{align}
of order $|l|:=l_1+\cdots+l_n$ for $g\in G$. Clearly, the 
last equality follows immediately from  (\ref{int-rep})
 by interchanging integration and  derivatives.
Using the $n$ moment functions $m_l$ of first order $|l|=1$, we form the vector-valued moment function
\begin{equation}\label{m1-vector}
m_{\bf 1}(g):=(m_{(1,0,\ldots,0)}(g),\ldots,m_{(0,\ldots,0,1)}(g))
\end{equation}
of first order.
Moreover, we use the usual scalar product $x\cdot y:=\sum_{l=1}^n x_ly_l$ on $\b R^n$.
We can now formulate the following oscillatory result;
 it will be proved in Section 2.

\begin{theorem}\label{osc}
 There exists a constant
$C=C(n)$
such that for all $g\in G$  and 
and $\lambda\in\b R^n$,
$$|\phi_{i\rho+\lambda}(g)- e^{i\lambda\cdot m_{\bf 1}(g)}|\le
C\|\lambda\|^2.$$
\end{theorem}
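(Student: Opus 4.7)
My plan is to reduce the estimate to a uniform variance bound for the exponent of the Harish-Chandra integrand. By the $K$-biinvariance of $\phi_{i\rho+\lambda}$, and hence of $m_{\bf 1}$ via (\ref{moment-function}), after polar decomposition I may assume $g=\mathrm{diag}(e^{t_1},\ldots,e^{t_n})$ with $t_1\ge\cdots\ge t_n$. Writing $a_r(k,g):=\ln(\Delta_r(k^*gg^*k)/\Delta_{r-1}(k^*gg^*k))$ (with the convention $\Delta_0\equiv 1$), formulas (\ref{int-rep}) and (\ref{moment-function}) read
$$\phi_{i\rho+\lambda}(g)=\int_K e^{i\lambda\cdot a(k,g)}\,dk,\qquad m_{\bf 1}(g)=\int_K a(k,g)\,dk.$$

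Since $a-m_{\bf 1}$ has vanishing $K$-mean, applying the elementary pointwise inequality $|e^{iy}-1-iy|\le y^2/2$ with $y=\lambda\cdot(a-m_{\bf 1})$ and then Cauchy-Schwarz gives
$$\bigl|\phi_{i\rho+\lambda}(g)-e^{i\lambda\cdot m_{\bf 1}(g)}\bigr|\le \frac{\|\lambda\|^2}{2}\int_K\|a(k,g)-m_{\bf 1}(g)\|^2\,dk.$$
The theorem therefore reduces to a uniform bound $\int_K\|a(k,g)-m_{\bf 1}(g)\|^2\,dk\le C_n$. Since the mean minimizes $L^2$-distance, it is enough to control $\int_K\|a(k,g)-2t\|^2\,dk$, where $t:=(t_1,\ldots,t_n)$.

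The heart of the argument is a Cauchy-Binet expansion of the minors. Let $K_r$ denote the $n\times r$ matrix formed by the first $r$ columns of $k\in K$; then $(k^*gg^*k)_{[r],[r]}=(gK_r)^*(gK_r)$, so Cauchy-Binet yields
$$\Delta_r(k^*gg^*k)=\sum_{|S|=r}e^{2\sum_{i\in S}t_i}\,w_S(k),\qquad w_S(k):=|\det k_{S,[r]}|^2,$$
with $\sum_{|S|=r}w_S(k)=\det(K_r^*K_r)=1$. For every size-$r$ subset $S=\{i_1<\cdots<i_r\}\subset[n]$ one has $i_j\ge j$, hence $\sum_{i\in S}t_i\le t_1+\cdots+t_r$, with equality precisely for $S=[r]$. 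Setting $\tilde N_r(k,g):=\Delta_r(k^*gg^*k)\cdot e^{-2(t_1+\cdots+t_r)}$, this sandwiches $\tilde N_r$ between its $S=[r]$ term and the full sum:
$$|\Delta_r(k)|^2=w_{[r]}(k)\;\le\;\tilde N_r(k,g)\;\le\;1.$$
Since $a_r(k,g)-2t_r=\ln\tilde N_r-\ln\tilde N_{r-1}$, one obtains the $g$-free pointwise bound $|a_r(k,g)-2t_r|\le-2\ln|\Delta_r(k)|-2\ln|\Delta_{r-1}(k)|$.

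Squaring, summing over $r$, and integrating reduces matters to the finiteness of $\int_K(\ln|\Delta_r(k)|)^2\,dk$ for $r=1,\ldots,n-1$ (the $r=0,n$ terms vanish as $|\Delta_0|=|\det k|=1$). This is standard: running Gram-Schmidt on the Haar-distributed columns of $k$ expresses $|\Delta_r(k)|^2$ as a product of independent beta-type variables whose log-squares are integrable. The trivial bound $|\phi_{i\rho+\lambda}(g)-e^{i\lambda\cdot m_{\bf 1}(g)}|\le 2$ takes care of $\|\lambda\|$ beyond any fixed range. The main obstacle is the Cauchy-Binet/ordering step producing the clean bracket $|\Delta_r(k)|^2\le\tilde N_r\le 1$; once that is in place, the remaining analytic estimates are routine.
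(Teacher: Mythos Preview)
Your argument is correct and follows the same overall architecture as the paper's proof: reduce to a uniform second-moment bound for the centered exponent, normalize $\Delta_r(k^*gg^*k)$ by a $k$-independent quantity to obtain something two-sidedly bounded, and then reduce everything to the finiteness of $\int_K(\ln|\det k_{[r],[r]}|)^2\,dk$. Where you differ is in the normalization step, and your choice is cleaner. The paper divides by the symmetric average $C_r(a)=\binom{n}{r}^{-1}\sum_{|S|=r}\prod_{i\in S}a_i$, which forces it through Lemmas~\ref{pos-coeff} and~\ref{abs} to bound $H_r^{-1}$ by a sum of $c_S(k)^{-1}$; only at the very end does it identify $c_{[r]}(k)=|\det k_{[r],[r]}|^2$. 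You instead divide by the dominant monomial $a_1\cdots a_r$ and use Cauchy--Binet to obtain the sharp pointwise bracket $|\Delta_r(k)|^2\le\tilde N_r\le 1$ in one stroke; this simultaneously recovers Lemma~\ref{pos-coeff} (with the coefficients made explicit) and bypasses Lemma~\ref{abs}. Your replacement of $m_{\bf 1}(g)$ by $2t$ via the variance-minimizing property of the mean is also a pleasant shortcut that the paper does not take. Both routes terminate at the same integrability statement, which the paper proves as Lemma~\ref{int-finite} using a truncation identity and an explicit change of variables; your Gram--Schmidt/beta sketch is correct in spirit but would benefit from either citing that lemma or giving the Jacobi-ensemble density of the truncated block explicitly. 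The closing remark about the trivial bound for large $\|\lambda\|$ is unnecessary, since your inequality already holds for all $\lambda$.
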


The function  $m_{\bf 1}$ is obviously determined uniquely by the property of the theorem.

We  return to the CLT. Similar to collecting the moment functions of first
order in the vector $m_{\bf 1}$, we group the  moment functions of second
order by
\begin{align}\label{m2-matrix}
m_{\bf 2}(g):=&\left(\begin{array}{ccc} m_{1,1}(g)&\cdots& m_{1,n}(g)\\
\vdots &&\vdots \\ m_{n,1}(g)&\cdots& m_{n,n}(g) \end{array}\right)
\\
:=&
\left(\begin{array}{cccc} m_{(2,0,\ldots,0)}(g)&m_{(1,1,0,\ldots,0)}(g)&\cdots&m_{(1,0,\ldots,0,1)}(g)
\\ m_{(1,1,0,\ldots,0)}(g)&m_{(0,2,0,\ldots,0)}(g)&\cdots&m_{(0,1,0,\ldots,0,1)}(g)
\\ \vdots &\vdots&&\vdots
\\ m_{(1,0,\ldots,0,1)}(g)&m_{(0,1,0,\ldots,0,1)}(g)&\cdots&m_{(0,\ldots,0,2)}(g)
\end{array}\right)
\notag\end{align}
for $g\in G$. We show in Section 3 as an easy consequence of
(\ref{moment-function})
that the $n\times n$ matrices $m_{\bf 2}(g)-m_{\bf 1}(g)^t\cdot m_{\bf 1}(g)$ are positive semidefinite.

Now consider  $\nu\in M^1(G)$ such that  the moment functions $m_{j,j}\ge0$ ($j=1,\ldots,n$) are $\nu$-integrable.
 We then say that  $\nu$ admits finite second moments. In this
case,  (\ref{moment-function}) and the Cauchy-Schwarz
inequality yield that all moments of order one and two are 
$\nu$-integrable, and we form the modified expectation vector and covariance matrix 
$$m_{\bf 1}(\nu):=  \int_G m_{\bf 1}(g)\> d\nu\in\b R^n ,\quad\quad
\Sigma^2(\nu):=  \int_G m_{\bf 2}(g)\> d\nu \> -\> m_{\bf 1}(\nu)^t\cdot  m_{\bf 1}(\nu)$$
of $\nu$.
The precise statement of our CLT is now as follows:

\begin{theorem}\label{main-clt}
If   $\nu\in M^1(G)$ is
$K$-biinvariant and admits finite second moments, then
\begin{equation}
\frac{1}{\sqrt k}(2\cdot\ln\sigma_{sing}(S_k)-k\cdot m_{\bf 1}(\nu)) \longrightarrow
N(0,\Sigma^2(\nu))
\end{equation}
 for $k\to\infty$ in distribution.
\end{theorem}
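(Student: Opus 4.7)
The plan is to prove convergence of the characteristic function of $\frac{1}{\sqrt k}(2\ln\sigma_{sing}(S_k)-k\,m_{\bf 1}(\nu))$ via the spherical Fourier transform, using Theorem~\ref{osc} to translate spherical-side information into ordinary characteristic-function information. The $K$-biinvariance of $\nu$ together with the defining property of spherical functions gives the multiplicativity
$$\int_G\phi_{i\rho+\lambda}\,d\nu^{*k}=\hat\nu(\lambda)^k,\qquad \hat\nu(\lambda):=\int_G\phi_{i\rho+\lambda}(g)\,d\nu(g),$$
and finite second moments allow differentiating (\ref{moment-function}) under the integral sign to obtain
$$\hat\nu(\mu)=1+i\mu\cdot m_{\bf 1}(\nu)-\tfrac12\mu^t M_2\mu+o(\|\mu\|^2),\qquad M_2:=\int_G m_{\bf 2}(g)\,d\nu(g).$$
Substituting $\mu=\lambda/\sqrt k$, raising to the $k$-th power, and expanding $\log(1+z)=z-z^2/2+\cdots$ (the correction $-\tfrac{1}{2}(i\lambda\cdot m_{\bf 1}(\nu))^2/k$ produced by the logarithm is precisely what converts $M_2$ into $\Sigma^2(\nu)=M_2-m_{\bf 1}(\nu)^t m_{\bf 1}(\nu)$) yields
$$\hat\nu(\lambda/\sqrt k)^k=\exp\bigl(i\sqrt k\,\lambda\cdot m_{\bf 1}(\nu)-\tfrac12\lambda^t\Sigma^2(\nu)\lambda+o(1)\bigr).$$

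Theorem~\ref{osc} is then used to pass from the spherical characteristic function to an ordinary one: the uniform estimate $|\phi_{i\rho+\lambda/\sqrt k}(g)-e^{i\lambda\cdot m_{\bf 1}(g)/\sqrt k}|\le C\|\lambda\|^2/k$ integrated against the probability measure $\nu^{*k}$ shows
$$\bigl|\hat\nu(\lambda/\sqrt k)^k-E\bigl[e^{i\lambda\cdot m_{\bf 1}(S_k)/\sqrt k}\bigr]\bigr|\le C\|\lambda\|^2/k\longrightarrow 0,$$
so that after multiplying by $e^{-i\sqrt k\,\lambda\cdot m_{\bf 1}(\nu)}$ and invoking L\'evy's continuity theorem,
$$\frac{1}{\sqrt k}\bigl(m_{\bf 1}(S_k)-k\,m_{\bf 1}(\nu)\bigr)\Longrightarrow N\bigl(0,\Sigma^2(\nu)\bigr).$$

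It remains to replace $m_{\bf 1}(S_k)$ by $2\ln\sigma_{sing}(S_k)$; this is the step I expect to require the most care. The plan is to establish a uniform-in-$g$ comparison $\|m_{\bf 1}(g)-2\ln\sigma_{sing}(g)\|\le C(n,\mathbb F)$: both vectors are $K$-biinvariant functions of $g$, their components sum to the same total $\ln\det(gg^*)$, and Cauchy interlacing bounds $\ln\Delta_r(k^*gg^*k)$ between $\sum_{i=n-r+1}^n 2\ln\sigma_{sing}(g)_i$ and $\sum_{i=1}^r 2\ln\sigma_{sing}(g)_i$ for every $k\in K$; the Haar average over $K$ in the definition of $m_{\bf 1}$ then absorbs the fluctuation into bounded constants determined by sphere integrals of the type $\int_{S^{nd-1}}\ln|v_1|^2\,dv$. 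Given such a uniform bound, $\|m_{\bf 1}(S_k)-2\ln\sigma_{sing}(S_k)\|/\sqrt k\to 0$ deterministically, and Slutsky's theorem converts the CLT of the previous paragraph into the statement of Theorem~\ref{main-clt}.
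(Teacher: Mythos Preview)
Your proposal is correct and follows essentially the same route as the paper: multiplicativity of the spherical transform, a second-order Taylor expansion of $\hat\nu$ at $0$, Theorem~\ref{osc} to replace $\phi_{i\rho+\lambda/\sqrt k}(S_k)$ by $e^{i\lambda\cdot m_{\bf 1}(S_k)/\sqrt k}$, L\'evy's continuity theorem for $(m_{\bf 1}(S_k)-k\,m_{\bf 1}(\nu))/\sqrt k$, and finally a uniform comparison $\|m_{\bf 1}(g)-2\ln\sigma_{sing}(g)\|\le C(n)$ together with Slutsky.

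The only point worth flagging is the last step. The paper proves exactly the uniform bound you anticipate (Lemma~\ref{absch-moment-function-1}(3)), but by a slightly different mechanism than the one you sketch. Cauchy interlacing by itself only yields $a_{n-r+1}\cdots a_n\le\Delta_r(k^*gg^*k)\le a_1\cdots a_r$, and the lower bound here is \emph{not} uniform in $g$; averaging it over $K$ does not cure this. What the paper does instead is use Lemma~\ref{pos-coeff} to write $\Delta_r(k^*ak)=\sum_I c_I(k)\,a_I$ with nonnegative coefficients summing to $1$, which gives the pointwise inequality $\ln(a_1\cdots a_r)-\ln\Delta_r(k^*ak)\le -\ln c_{1,\ldots,r}(k)=-\ln|\det k_r|^2$ with a right-hand side independent of $a$; the remaining task is to show $\int_K\ln|\det k_r|^{-2}\,dk<\infty$, which is Lemma~\ref{int-finite}. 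Your sphere-integral heuristic $\int_{S^{nd-1}}\ln|v_1|^2\,dv$ is precisely this computation for $r=1$; for $r\ge 2$ the right object is the Stiefel-type integral $\int_K\ln|\det k_r|^2\,dk$, and that is the ingredient your sketch is implicitly reaching for.
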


This paper is organized as follows: Section 2 is devoted exclusively to the
proof of Theorem \ref{osc}. 
In Section 3 we then shall present the proof of Theorem \ref{main-clt}. There
we also give  a precise condition on $\nu$ when $\Sigma^2(\nu)$  
 is positive definite.

 We finally remark that
the results of our paper can
be transfered to the Grassmann manifolds $(SO_0(p,q)/(SO(p)\times SO(q))$ and 
$(SU(p,q)/S(U(p)\times U(q))$. In this case, the spherical functions are certain
Heckman-Opdam hypergeometric functions of type BC, for which a Harish-Chandra
integral representation analog to (\ref{int-rep}) is available; see \cite{Sa}
and \cite{RV}. We plan to carry out this in near future.

\section{Proof of the oscillatory behavior of spherical functions}

This section is  devoted to the proof of Theorem \ref{osc} which
 depends on several facts which may be more or less well-known.
As we could not find suitable published references, we include proofs for sake of completeness.
We start with a result about the principal minors $\Delta_r$:

\begin{lemma}\label{pos-coeff}
Let $1\le r\le n$ be integers, $\mathbb F=\mathbb R$ or $\mathbb C$,  and
$u\in U_n(\mathbb F)$. Then
$$\Delta_r(u^*\cdot diag(a_1,\ldots,a_n)\cdot u)=\sum_{1\le i_1<i_2<\ldots<i_r\le n}
 c_{i_1,\ldots,i_r} a_{i_1}\cdot a_{i_2}\cdots \cdot a_{i_r}$$
for all $a_{i_1},a_{i_2},\ldots, a_{i_r}\in \mathbb R$ with coefficients
$c_{i_1,\ldots,i_r}=c_{i_1,\ldots,i_r}(u)$ satisfying $c_{i_1,\ldots,i_r}\ge0$
for $1\le i_1<i_2<\ldots<i_r\le n$ and
$\sum_{1\le i_1<i_2<\ldots<i_r\le n} c_{i_1,\ldots,i_r}=1$.
\end{lemma}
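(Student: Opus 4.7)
The plan is to rewrite the principal minor as the determinant of a small Gram-type product and then apply the Cauchy--Binet formula. Writing $D=diag(a_1,\ldots,a_n)$ and letting $U_r$ denote the $n\times r$ matrix consisting of the first $r$ columns of $u$, I would first observe that the upper-left $r\times r$ block of $u^*Du$ is precisely $U_r^* D U_r$, so
$$\Delta_r(u^*\cdot diag(a_1,\ldots,a_n)\cdot u)=\det(U_r^* D U_r).$$

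Next I would apply Cauchy--Binet to the factorization $(U_r^*)\cdot(DU_r)$ of an $r\times n$ by an $n\times r$ product, obtaining a sum indexed by the $r$-subsets $S=\{i_1<\cdots<i_r\}\subset\{1,\ldots,n\}$. For each such $S$, let $V_S$ be the $r\times r$ submatrix of $U_r$ with rows indexed by $S$. Because $D$ is diagonal, the row-$S$ block of $DU_r$ is $diag(a_{i_1},\ldots,a_{i_r})\,V_S$, with determinant $a_{i_1}\cdots a_{i_r}\,\det V_S$; the column-$S$ block of $U_r^*$ is $V_S^*$, with determinant $\overline{\det V_S}$. Multiplying and summing gives
$$\det(U_r^* D U_r)=\sum_{1\le i_1<\cdots<i_r\le n}|\det V_S|^2\cdot a_{i_1}\cdots a_{i_r},$$
so setting $c_{i_1,\ldots,i_r}(u):=|\det V_S|^2$ automatically yields the desired expansion together with the nonnegativity of the coefficients.

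Finally, the normalization $\sum c_{i_1,\ldots,i_r}=1$ would follow by specializing to $a_1=\cdots=a_n=1$: then $D=I$ and $u^*Du=u^*u=I$ since $u$ is unitary, hence $\Delta_r(I)=1$. There is no real obstacle in the argument; the only thing to be mindful of is treating $\mathbb F=\mathbb R$ and $\mathbb F=\mathbb C$ uniformly, which works because $\det(V_S^*)=\overline{\det V_S}$ in either case, and its product with $\det V_S$ is exactly the squared modulus $|\det V_S|^2\ge 0$.
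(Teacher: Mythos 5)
Your proof is correct, but it takes a genuinely different route from the paper. The paper argues indirectly: it first notes that $h_r(a):=\Delta_r(u^*\,\mathrm{diag}(a)\,u)$ is a homogeneous polynomial of degree $r$, kills all monomials with a repeated index by a rank argument (if $\mathrm{diag}(a)$ has rank $q<r$ then the $r$-th principal minor vanishes), obtains nonnegativity of $c_{1,\ldots,r}$ from the fact that $0\le u^*\left(\begin{smallmatrix}I_r&0\\0&0\end{smallmatrix}\right)u\le I_n$ in the semidefinite order, and gets the normalization by evaluating at $a=(1,\ldots,1)$ exactly as you do. Your Cauchy--Binet computation replaces the first three steps by a single identity that produces the closed form $c_{i_1,\ldots,i_r}(u)=|\det V_S|^2$; this makes the vanishing of repeated-index terms and the nonnegativity automatic rather than argued separately, and it is arguably the more transparent proof. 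It also yields for free the identification $c_{1,\ldots,r}(k)=|\det k_r|^2$ (with $k_r$ the upper-left $r\times r$ block), which the paper only derives later, inside the proof of Theorem \ref{osc}, by a separate computation. What the paper's more elementary argument buys is independence from the Cauchy--Binet formula and a slightly softer structural statement (positivity via the semidefinite order), but nothing essential is lost in your version. All steps you give check out: the block identification $\Delta_r(u^*Du)=\det(U_r^*DU_r)$, the factor $a_{i_1}\cdots a_{i_r}\det V_S$ for the row-$S$ block of $DU_r$, the conjugate determinant for $V_S^*$ in both the real and complex case, and the normalization from $u^*u=I_n$.
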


\begin{proof}
Clearly, $h_r(a_1,\ldots, a_n):=\Delta_r(u^*\cdot diag(a_1,\ldots,a_n)\cdot u)$
is a homogeneous polynomial of degree $r$, i.e.,
$$h_r(a_1,\ldots, a_n)=\sum_{1\le i_1\le i_2\le\ldots\le i_r\le n}
 c_{i_1,\ldots,i_r} a_{i_1}\cdot a_{i_2}\cdots \cdot a_{i_r}.$$
We first check that $c_{i_1,\ldots,i_r}\ne0$ is possible only for
$1\le i_1<i_2<\ldots<i_r\le n$. For this consider $i_1,\ldots,i_r$ with
$|\{i_1,\ldots,i_r\}|=:q<r$. By changing the numbering of the variables
$a_1,\ldots,a_n$ (and  of rows and columns of $u$ in an appropriate
way), we  may assume that $\{i_1,\ldots,i_r\}=\{1,\ldots, q\}$. In this case,
$u^*\cdot diag(a_1,\ldots,a_q,0,\ldots,0)\cdot u$ has rank at most $q<r$.
Thus
$$0=h_r(a_1,\ldots,a_q,0,\ldots,0)=\sum_{1\le i_1\le i_2\le\ldots\le i_r\le q}
 c_{i_1,\ldots,i_r} a_{i_1}\cdot a_{i_2}\cdots \cdot a_{i_r}$$
for all $a_1,\ldots,a_q$. This yields $c_{i_1,\ldots,i_r}=0$ for $1\le i_1\le
i_2\le\ldots\le i_r\le q$
and proves that
$$h_r(a_1,\ldots, a_n)=\sum_{1\le i_1<i_2<\ldots<i_r\le n}
 c_{i_1,\ldots,i_r}a_{i_1}\cdot a_{i_2}\cdots \cdot a_{i_r}.$$

For the nonnegativity we again may restrict our attention to
$c_{1,\ldots,r}$. In this case,
$$0\le \left(\begin{array}{cc} I_r&0\\0&0\end{array}\right)\le I_n
\quad\quad\text{and thus}\quad\quad
0\le u^*\left(\begin{array}{cc} I_r&0\\0&0\end{array}\right)u\le I_n$$
w.r.t. the usual ordering of positive semidefinite matrices.
As this inequality holds also for the upper left $r\times r$ block, we obtain
$$c_{1,\ldots,r}=h_r(1,\ldots,1,0,\ldots,0)=
\Delta_r\left(u^*\left(\begin{array}{cc}
  I_r&0\\0&0\end{array}\right)u\right)\ge0.$$
Finally, as
$$\sum_{1\le i_1<i_2<\ldots<i_r\le n} c_{i_1,\ldots,i_r}= h_r(1,\ldots,1)=1,$$
the proof is complete.
\end{proof}

Let us keep the notation of Lemma \ref{pos-coeff}. We now compare
 $h_r(a_1,\ldots, a_n)$ with the homogeneous polynomial
\begin{equation}\label{C-r}
C_r(a_1,\ldots,a_n):=\frac{1}{{n\choose r}}
\sum_{1\le i_1<i_2<\ldots<i_r\le n}a_{i_1} a_{i_2}\cdots \cdot a_{i_r}>0 
\quad\quad(r=1,\ldots,n).
\end{equation}

\begin{lemma}\label{abs}
For all $a_1,\ldots,a_n>0$,
$$0<\frac{C_r(a_1,\ldots,a_n)}{h_r(a_1,\ldots,a_n)}\le \frac{1}{{n\choose r}} 
\sum_{1\le i_1<i_2<\ldots<i_r\le n}c_{i_1,\ldots,i_r}(u)^{-1},$$
where, depending on $u$, on both sides the value $\infty$ is possible.
\end{lemma}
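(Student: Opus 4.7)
My plan is to strip the problem down to a statement about the coefficients $c_{i_1,\ldots,i_r}(u)$ alone, and then reduce it to an elementary pointwise inequality. By Lemma \ref{pos-coeff}, writing $N:=\binom{n}{r}$ and indexing by multi-indices $I=(i_1<\cdots<i_r)$, we have $h_r(a_1,\ldots,a_n)=\sum_I c_I a_I$ and $C_r(a_1,\ldots,a_n)=\frac{1}{N}\sum_I a_I$, where $a_I:=a_{i_1}\cdots a_{i_r}>0$ and $\sum_I c_I=1$. Since at least one $c_I$ is positive, $h_r>0$, so the quotient $C_r/h_r$ is a well-defined finite number. The claimed inequality is therefore equivalent to
$$\sum_I a_I\ \le\ \Bigl(\sum_I c_I a_I\Bigr)\Bigl(\sum_J c_J^{-1}\Bigr),$$
where the right-hand side is $+\infty$ as soon as some $c_I$ vanishes.

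The main (and essentially only) observation is a trivial pointwise bound: for every multi-index $I$ with $c_I>0$,
$$c_I\cdot\sum_J c_J^{-1}\ \ge\ c_I\cdot c_I^{-1}\ =\ 1,$$
since the remaining terms $c_J^{-1}$ for $J\ne I$ are nonnegative (or infinite). Multiplying by $a_I>0$ gives $a_I\le c_I a_I\sum_J c_J^{-1}$. Summing over all $I$ with $c_I>0$ yields the displayed inequality; indices $I$ with $c_I=0$ contribute nothing to $\sum_I c_I a_I$ but force $\sum_J c_J^{-1}=+\infty$, so the inequality is trivially valid in that case too. Dividing by $Nh_r>0$ gives exactly the bound stated in the lemma.

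There is no serious obstacle: the nontrivial input is already encapsulated in Lemma \ref{pos-coeff} (nonnegativity of the $c_I$ and the normalization $\sum_I c_I=1$), and the remainder is one line of elementary arithmetic. I would only take care to note explicitly that positivity of the $a_j$ ensures $h_r>0$, so that forming the ratio $C_r/h_r$ is legitimate, and to mention the degenerate case where some $c_I$ vanishes, as the lemma itself already allows $\infty$ on the right.
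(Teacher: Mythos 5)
Your proof is correct and is essentially the paper's argument: the paper bounds $\sum_I a_I$ by $\bigl(\max_J c_J^{-1}\bigr)\sum_I c_I a_I$ and then replaces the maximum by the sum of the $c_J^{-1}$, which is the same elementary pointwise estimate you use (you just skip the intermediate maximum and handle the degenerate case $c_I=0$ slightly more explicitly).
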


\begin{proof} Positivity is clear by Lemma \ref{pos-coeff}. Moreover,
\begin{align}
C_r(a_1,\ldots,a_n)=&\frac{1}{{n\choose r}}
\sum_{1\le i_1<i_2<\ldots<i_r\le n}a_{i_1} a_{i_2}\cdots \cdot a_{i_r}
\notag\\
\le&\frac{\max_{1\le i_1<i_2<\ldots<i_r\le n}c_{i_1,\ldots,i_r}^{-1}}{{n\choose r}}
\sum_{1\le i_1<i_2<\ldots<i_r\le n}c_{i_1,\ldots,i_r}a_{i_1} a_{i_2}\cdots \cdot a_{i_r}
\notag
\end{align}
which immediately leads to the claim.
\end{proof}

We also need the following observation from linear algebra:

\begin{lemma}\label{glech-det}
Let $u\in U_n(\b C)$ have the block structure
 $u=\left(\begin{array}{cc} u_1&*\\ *&u_2\end{array}\right)$
with quadratic blocks $u_1\in M_r(\mathbb C)$ and $u_2\in M_{n-r}(\mathbb C)$ 
with $1\le r\le n$. Then $|\det u_1|=|\det u_2|$.
\end{lemma}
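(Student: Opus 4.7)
The plan is to exploit both unitarity relations $uu^* = I_n$ and $u^*u = I_n$ simultaneously, reading off only the diagonal blocks. Writing $u = \begin{pmatrix} u_1 & B \\ C & u_2 \end{pmatrix}$ with the indicated block sizes, the $(1,1)$-block of $uu^* = I_n$ gives
$$u_1 u_1^* + BB^* = I_r,$$
while the $(2,2)$-block of $u^*u = I_n$ gives
$$B^*B + u_2^* u_2 = I_{n-r}.$$
Notice that only the off-diagonal block $B$ survives on the right-hand side in each case; the block $C$ need not enter the argument at all.

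Taking determinants of the two identities above yields
$$|\det u_1|^2 = \det(I_r - BB^*), \qquad |\det u_2|^2 = \det(I_{n-r} - B^*B).$$
So the lemma reduces to the equality $\det(I_r - BB^*) = \det(I_{n-r} - B^*B)$, which is exactly Sylvester's determinant identity $\det(I_m - XY) = \det(I_k - YX)$ (valid for any $X \in M_{m,k}$, $Y \in M_{k,m}$), applied here to the rectangular matrix $B$ of size $r \times (n-r)$ and its adjoint.

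I do not anticipate any genuine obstacle: the argument is a two-line computation once one invokes Sylvester's identity, and that identity is in turn standard (for instance, a one-line proof compares the two block-triangular factorizations of $\begin{pmatrix} I_r & B \\ B^* & I_{n-r} \end{pmatrix}$ via Schur complements from the top-left and from the bottom-right). If a self-contained exposition is desired, one can simply include this short derivation alongside the main computation; no deeper tool such as the CS decomposition or exterior algebra is needed.
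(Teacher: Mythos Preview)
Your proof is correct and takes a genuinely different route from the paper's.

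The paper proves the lemma via the $KAK$ (i.e.\ CS) decomposition of $U_n(\mathbb C)$ with respect to $K=U_r(\mathbb C)\times U_{n-r}(\mathbb C)$: writing
\[
u=\begin{pmatrix}a_1&0\\0&b_1\end{pmatrix}
\begin{pmatrix}c&s&0\\-s&c&0\\0&0&I_{n-2r}\end{pmatrix}
\begin{pmatrix}a_2&0\\0&b_2\end{pmatrix}
\]
with $c=\mathrm{diag}(\cos\phi_j)$, one reads off $u_1=a_1ca_2$ and $u_2=b_1\bigl(\begin{smallmatrix}c&0\\0&I\end{smallmatrix}\bigr)b_2$, so both $|\det u_1|$ and $|\det u_2|$ equal $|\det c|$. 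This relies on a structural theorem (cited from Helgason) and the assumption $2r\le n$ made without loss of generality.

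Your argument is more elementary and self-contained: from the two diagonal blocks of $uu^*=I$ and $u^*u=I$ you extract $|\det u_1|^2=\det(I_r-BB^*)$ and $|\det u_2|^2=\det(I_{n-r}-B^*B)$, and then finish with Sylvester's identity. No external decomposition theorem is invoked, no case reduction is needed, and the proof works uniformly for all $r$. The paper's approach, on the other hand, gives slightly more information (the common value is $\prod_j|\cos\phi_j|$ in terms of the principal angles), which is irrelevant for the lemma as stated but connects to the geometry of the Grassmannian. For the purpose at hand your argument is the cleaner one.
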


\begin{proof} W.l.o.g. we may assume $2r\le n$. By the $KAK$-decomposition of  $U_n(\mathbb C)$
with $K=U_{r}(\mathbb C)\times U_{n-r}(\mathbb C)$ (see e.g.~Theorem VII.8.6 of \cite{H2}), we  write
$u$ as
$$u=\left(\begin{array}{cc} a_1&0\\0&b_1\end{array}\right)
\cdot\left(\begin{array}{ccc} c&s&0\\-s&c&0\\0&0&I_{q-2r}\end{array}\right)
\cdot\left(\begin{array}{cc} a_2&0\\0&b_2\end{array}\right)
$$
with $a_1,a_2\in U_r(\mathbb C)$, $b_1,b_2 \in U_{n-r}(\mathbb C)$ and with
 $c=diag(\cos\phi_1,\ldots,\cos\phi_r)$ and  $s=diag(\sin\phi_1,\ldots,\sin\phi_r)$ for suitable
$ \phi_1,\ldots,\phi_r\in\mathbb R$. Therefore, 
$$u_1=a_1ca_2
\quad\quad\text{and}\quad\quad
 u_2=b_1\left(\begin{array}{cc} c&0\\0&I_{q-2r}\end{array}\right)b_2$$
 which immediately implies the claim.
\end{proof}

We shall also need the  following elementary observation:

\begin{lemma}\label{ln-est}
Let  $\epsilon\in]0,1]$, $M\ge1$ and $m\in\b N$. Then there exists a constant $C=C(\epsilon,M,m)>0$
such that for all $z\in]0,M]$,
$$|\ln(z)|^m\le C\left(1+z^{-\epsilon}\right).$$
\end{lemma}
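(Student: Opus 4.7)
The plan is to split the interval $(0,M]$ into two pieces, $[1,M]$ and $(0,1)$, and handle each separately. On the compact set $[1,M]$, the function $|\ln z|^m$ is continuous and therefore bounded by $(\ln M)^m$; since $1+z^{-\epsilon}\ge 1$ on this range, any constant $C\ge (\ln M)^m$ suffices there.

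For the case $z\in(0,1)$, the natural substitution is $y:=1/z\in(1,\infty)$, which reduces the desired inequality to $(\ln y)^m\le C\,y^\epsilon$ for $y\ge 1$. This is a standard growth comparison: since $\epsilon>0$, one has $\lim_{y\to\infty}(\ln y)^m/y^\epsilon=0$ (by repeated application of L'H\^opital's rule, or by using $\ln y\le y^{\epsilon/m}/(\epsilon/m)\cdot e^{-1}$ type estimates once $y$ is large). Combined with continuity of $(\ln y)^m/y^\epsilon$ on $[1,\infty)$ and its value $0$ at $y=1$, the ratio is bounded on $[1,\infty)$, which gives a constant $C'$ so that $|\ln z|^m\le C'z^{-\epsilon}$ on $(0,1)$.

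Taking $C$ to be the maximum of the two constants obtained in the two regimes then yields the stated bound $|\ln z|^m\le C(1+z^{-\epsilon})$ uniformly on $(0,M]$. The dependence of $C$ on $\epsilon$, $M$ and $m$ is tracked through both pieces: on $[1,M]$ the constant $(\ln M)^m$ depends on $M$ and $m$, while on $(0,1)$ the supremum of $(\ln y)^m/y^\epsilon$ depends on $\epsilon$ and $m$.

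There is no real obstacle here; the lemma is an elementary calculus estimate invoked later to control the logarithmic factors arising from the moment functions against a mild power of the principal minors in the Harish-Chandra integral representation. The only care needed is to make the case distinction cleanly and to recognise the two-regime structure so that both the behaviour near $0$ and on a compact range are absorbed into a single clean inequality.
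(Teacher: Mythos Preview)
Your proof is correct and follows essentially the same two-regime decomposition as the paper: trivial boundedness on $[1,M]$ and the growth comparison $|\ln z|^m\le C'z^{-\epsilon}$ on $(0,1]$. The only difference is cosmetic---the paper records the explicit calculus bound $|x^{\alpha}\ln x|\le 1/(e\alpha)$ on $(0,1]$ (applied with $\alpha=\epsilon/m$) in place of your soft L'H\^opital/continuity argument.
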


\begin{proof} Elementary calculus yields $|x^\epsilon\cdot\ln x|\le 1/(e\epsilon)$ for
$x\in]0,1]$ and
 the Euler number $e=2,71...$. This leads to the estimate for  $z\in]0,1]$.
The estimate is trivial for $z\in]1,M]$.
\end{proof}

\begin{proof}[Proof of Theorem \ref{osc}:]
As the spherical functions and the moment functions on $G$ are constant on the
double cosets w.r.t.~$K$ by definition, and as each double coset has a  representative $g$
such that $gg^*=diag(a_1,\ldots,a_n)$ is diagonal with $a_1\ge\ldots\ge a_n>0$, it suffices to consider 
these group elements $g\in G$. We thus fix $\lambda\in\b R^n$ and $a_1\ge\ldots\ge a_n>0$ and put 
$a:=diag(a_1,\ldots,a_n)$. According to (\ref{int-rep}), (\ref{moment-function}) and (\ref{m1-vector})
we have to estimate
\begin{align}\label{diff1}
R:=& R(\lambda,a):=
|\phi_{i\rho+\lambda}(g)- e^{i\lambda\cdot m_{\bf 1}(g)}|
\\
=& \biggl|  \int_K exp\left(i\sum_{r=1}^n (\lambda_r-\lambda_{r+1})\cdot\ln\Delta_r(k^*ak)\right)\> dk
\notag
\\
&\quad\quad
 -exp\left(i \int_K\sum_{r=1}^n (\lambda_r-\lambda_{r+1})\cdot\ln\Delta_r(k^*ak)\> dk\right)
\biggr|
\notag
 \end{align}
with the convention $\lambda_{n+1}:=0$. For $r=1,\ldots,n$, we now use the polynomial
$C_r$ from Eq.~(\ref{C-r})
and write the logarithms of the principal minors in (\ref{diff1}) as 
\begin{equation}
\ln\Delta_r(k^*ak)=\ln C_r(a_1,\ldots,a_r) +\ln(H_r(k,a))
\quad\text{with}\quad
H_r(k,a):=\frac{\Delta_r(k^*ak)}{ C_r(a_1,\ldots,a_n)}.
 \end{equation}
With this notation and with $|e^{ix}|=1$ for $x\in\b R$, we rewrite (\ref{diff1}) as
\begin{align}\label{diff2}
R=&\biggl|  \int_K exp\left(i\sum_{r=1}^n (\lambda_r-\lambda_{r+1})\cdot\ln(H_r(k,a))\right)\> dk
\notag
\\
&\quad\quad
 -exp\left(i \int_K\sum_{r=1}^n (\lambda_r-\lambda_{r+1})\cdot\ln(H_r(k,a))\> dk\right)\biggr|.
\end{align}
We now use the power series for both exponential functions and observe that
 the terms of order 0 and 1 are equal in the difference above. Hence, 
$$R\le R_1+R_2$$
for
$$R_1:=\int_K\biggl| 
exp\left(i\sum_{r=1}^n (\lambda_r-\lambda_{r+1})\cdot\ln(H_r(k,a))\right)
-\left(1+i\sum_{r=1}^n (\lambda_r-\lambda_{r+1})\cdot\ln(H_r(k,a))\right)\biggr|\> dk$$
and
$$R_2:=\biggl|exp\left(i \int_K\sum_{r=1}^n (\lambda_r-\lambda_{r+1})\cdot\ln(H_r(k,a))\> dk\right)
\> -\> 1-i\int_K\sum_{r=1}^n (\lambda_r-\lambda_{r+1})\cdot\ln(H_r(k,a))\>
dk\biggr|.$$
Using the well-known elementary estimates $|\cos x-1|\le x^2/2$ and 
 $|\sin x-x|\le x^2/2$ for $x\in\b R$, we obtain
$|e^{ix}-(1+ix)|\le x^2$ for $x\in\b R$.
Therefore,
defining
$$A_m:=
\int_K\biggl| \sum_{r=1}^n
(\lambda_r-\lambda_{r+1})\cdot\ln(H_r(k,a))\biggr|^m\>dk
\quad\quad(m=1,2),$$
we conclude that
$$R\le R_1+R_2\le A_2 +A_1^2.$$
In the following, let $C_1,C_2,\ldots$ suitable constants. As $A_1^2\le A_2$  by 
 Jensen's inequality, and as
$$A_2\le \|\lambda\|^2 \cdot C_1\cdot
\int_K\sum_{r=1}^n |\ln(H_r(k,a))|^2\>dk =:\|\lambda\|^2 \cdot B_2,$$
we obtain
$R\le B_2\cdot 2 \|\lambda\|^2 $.
In order to complete the proof, we must check that $B_2$, i.e., that
 the integrals
\begin{equation}\label{I-m}
L_{r}:=\int_K |\ln(H_r(k,a))|^2\>dk
\end{equation}
remain bounded independent of 
$a_1,\ldots, a_n>0$ for $r=1,\ldots,n$.

For this fix  $r$.
 Lemma \ref{pos-coeff} in particular implies that for all $a_1,\ldots,a_n>0$,
$$\Delta_r(k^*ak)\le \sum_{1\le i_1<i_2<\ldots<i_r\le n}
a_{i_1}\cdot a_{i_2}\cdots \cdot a_{i_r} ={n\choose r}C_r(a_1,\ldots,a_n)$$
and $\Delta_r(k^*ak)>0$.
In other words, 
\begin{equation}\label{H-r}
0 <\frac{  \Delta_r(k^*ak) }{ C_r(a_1,\ldots,a_n)}=  H_r(k,a)  \le {n\choose r}.
\end{equation}
We conclude from (\ref{I-m}), (\ref{H-r}) and Lemma \ref{ln-est} that for any 
$\epsilon\in]0,1[$ and suitable $C_2=C_2(\epsilon)$,
$$L_{r}\le C_2\int_K \left(1+ H_r(a_1,\ldots,a_n)^{-\epsilon}\right)\> dk.$$
Therefore, by Lemma \ref{abs},
\begin{align}\label{suabs}
L_{r}\le& C_2 +C_3\int_K\left(  \sum_{1\le i_1<i_2<\ldots<i_r\le n} c_{i_1,\ldots,i_r}(k)^{-1}
  \right)^\epsilon\> dk
\notag\\
\le& C_2 +C_3\cdot {n\choose r}^\epsilon
 \sum_{1\le i_1<i_2<\ldots<i_r\le n}\int_K c_{i_1,\ldots,i_r}(k)^{-\epsilon}\> dk.
\end{align}
The right hand side of (\ref{suabs}) is independent of $a_1,\ldots,a_n$, and, by the definition of
 the $ c_{i_1,\ldots,i_r}(k)$ in Lemma \ref{pos-coeff}, $\int_K c_{i_1,\ldots,i_r}(k)^{-\epsilon}\> dk$
is independent of $1\le i_1<i_2<\ldots<i_r\le n$. Therefore, it suffices to check that
$$I_r:=\int_K c_{1,\ldots,r}(k)^{-\epsilon}\> dk =\int_K \Delta_r\left(k^*\left(\begin{array}{cc}
  I_r&0\\0&0\end{array}\right)k\right)^{-\epsilon}\> dk<\infty.$$
For this, we write $k$ as block matrix
$k=\left(\begin{array}{cc} k_r&*\\ *&k_{n-r}\end{array}\right)$ with
 $k_r\in M_r(\mathbb C)$ and $k_{n-r}\in M_{n-r}(\mathbb C)$ and observe that
$$ \Delta_r\left(k^*\left(\begin{array}{cc}
  I_r&0\\0&0\end{array}\right)k\right) =\Delta_r\left(\begin{array}{cc}k_r^*k_r
&*\\ *&*\end{array}\right)=|\det k_r|^2.
$$
We thus have to check that $\int_K |\det k_r|^{-2\epsilon}\> dk<\infty.$
As this is a consequence of the following lemma,
 the proof of the theorem is complete.
\end{proof}

 \begin{lemma}\label{int-finite} Keep the block matrix notation above. For $\epsilon<1/2$,
 $$\int_K |\det k_r|^{-2\epsilon}\> dk<\infty.$$
\end{lemma}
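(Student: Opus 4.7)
The plan is to realize the Haar measure on $K$ via a Gaussian randomization and to compute the required integral through the QR decomposition. Observe first that $|\det k_r|^{-2\epsilon}$ depends on $k$ only through its first $r$ columns; under the Haar measure these form a uniformly distributed element of the Stiefel manifold $V_r(\mathbb{F}^n) := \{V \in M_{n,r}(\mathbb{F}) : V^*V = I_r\}$. A standard way to realize this uniform law is to take an $n \times r$ matrix $Z$ with i.i.d.\ standard (real or complex) Gaussian entries and to form the thin QR factorization $Z = QR$, where $Q \in V_r(\mathbb{F}^n)$ and $R \in M_r(\mathbb{F})$ is upper triangular with positive real diagonal. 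A classical theorem (often attributed to Bartlett) asserts that $Q$ is uniform on $V_r(\mathbb{F}^n)$ and independent of $R$, that the entries of $R$ are mutually independent, and that the squared diagonal entry $R_{jj}^2$ has a density behaving like $x^{d(n-j+1)/2-1}$ near $0$, where $d := \dim_{\mathbb{R}} \mathbb{F} \in \{1,2\}$.

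Splitting $Z = \binom{Z_1}{Z_2}$ with $Z_1 \in M_r(\mathbb{F})$ and correspondingly $Q = \binom{Q_1}{Q_2}$ with $Q_1 \in M_r(\mathbb{F})$, the identity $Z_1 = Q_1 R$ gives $|\det Z_1|^2 = |\det Q_1|^2 \cdot |\det R|^2$. By independence of $Q_1$ and $R$ this yields
$$E\bigl[|\det Z_1|^{-2\epsilon}\bigr] = E\bigl[|\det Q_1|^{-2\epsilon}\bigr] \cdot E\bigl[|\det R|^{-2\epsilon}\bigr].$$
Since $Q_1$ has the same distribution as $k_r$ under Haar measure on $K$, the target integral $\int_K |\det k_r|^{-2\epsilon}\, dk$ equals $E[|\det Q_1|^{-2\epsilon}]$, and it therefore suffices to verify finiteness of the two expectations on the left-hand side; the second factor on the right is automatically strictly positive.

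Applying the same QR argument to the square matrix $Z_1$ itself, one obtains $|\det Z_1|^2$ as a product of $r$ independent positive variables whose densities behave like $x^{d(r-j+1)/2 - 1}$ near $0$. Hence $E[|\det Z_1|^{-2\epsilon}]$ factors as a product of negative moments expressible through Gamma functions, and is finite iff $\epsilon < d/2$, the bottleneck coming from the minimal-shape factor $R'_{rr}$; this yields the sharp thresholds $\epsilon < 1/2$ in the real case ($d=1$) and $\epsilon < 1$ in the complex case ($d=2$), both consistent with the hypothesis. The second factor $E[|\det R|^{-2\epsilon}]$ is handled analogously and is finite whenever $\epsilon < d(n-r+1)/2$, which follows from $\epsilon < 1/2$ since $n \ge r$. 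The main obstacle is to state the QR distributional facts cleanly in a unified real/complex form so that the independence and density assertions can be invoked simultaneously; a secondary subtlety is locating the origin of the critical exponent, which stems precisely from the smallest diagonal $R'_{rr}$ in the QR of $Z_1$, corresponding to a chi-square with the minimal effective degrees of freedom $d$, and producing exactly the stated threshold $\epsilon < 1/2$.
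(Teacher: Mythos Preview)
Your argument is correct and follows a genuinely different route from the paper's. The paper first invokes Lemma~\ref{glech-det} to reduce to $r\le n/2$, then uses an external truncation lemma of R\"osler to rewrite the Haar integral as an integral over the matrix ball $B_r=\{w\in M_r(\mathbb F):w^*w\le I_r\}$, applies a polar--type change of variables $P:B^r\to B_r$ taken from another paper of R\"osler, and finally computes $\det P$ via the auxiliary Lemma~\ref{det-P}; finiteness is then read off by a one--dimensional Fubini argument on the $(1,1)$--variable. Your Gaussian/QR approach bypasses all of this: once the Bartlett decomposition is granted, the question reduces to negative moments of independent $\chi^2$--type variables, and the critical threshold $\epsilon<d/2$ falls out immediately from the minimal effective degrees of freedom $d$ carried by $(R'_{rr})^2$. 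This is more self--contained (no reliance on \cite{R1}, \cite{R2}, Lemma~\ref{glech-det} or Lemma~\ref{det-P}) and in fact yields the sharper range $\epsilon<1$ in the complex case.

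One small expository slip: you write ``finiteness of the two expectations on the left--hand side'', but your displayed identity has a single expectation on the left. What your argument actually establishes (and all that is needed) is that $E\bigl[|\det Z_1|^{-2\epsilon}\bigr]<\infty$ together with the trivial strict positivity of $E\bigl[|\det R|^{-2\epsilon}\bigr]$; since the product formula holds in $[0,\infty]$ by Tonelli for independent nonnegative factors, this forces the target $E\bigl[|\det Q_1|^{-2\epsilon}\bigr]$ to be finite. Your separate verification that $E\bigl[|\det R|^{-2\epsilon}\bigr]<\infty$ is harmless but not required.
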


\begin{proof}
The statement is obvious for $r=n$.
 Moreover, by Lemma \ref{glech-det} we may assume that
$1\le r\le n/2$ which we shall assume now. In this case, we introduce the 
matrix ball 
$$B_r:= \{ w\in M_{r}(\b F):\> w^*w\le I_r\}$$ 
as well as the ball 
 $B:= \{ y \in M_{1,r}(\b F)\equiv \b F^n: \> \|y\|_2^2\le 1\}$. We
 conclude from the truncation lemma 2.1 of \cite{R2} that
$$\frac{1}{\kappa_r}\int_K |\det k_r|^{-2\epsilon}\> dk=\int_{B_r}|\det w|^{-2\epsilon}
\Delta(I_r-w^*w)^{(n-2r+1)\cdot d/2-1}\, dw$$
where $dw$ is the usual Lebesgue measure on the ball $B_r$ and
 $$\kappa_r:=\left( \int_{B_r} \det(I_r-w^*w)^{(n-2r+1)\cdot d/2-1}\> dw   \right)^{-1}.$$
Moreover,
by Lemma 3.7 and Corollary 3.8 of \cite{R1}, the mapping $P:B^r\to B_r$
with
\begin{equation}\label{trafo-P}
 P(y_1, \ldots, y_r):= \begin{pmatrix}y_1\\y_2(I_r-y_1^*y_1)^{1/2}\\ 
\vdots\\
  y_r(I_r-y_{r-1}^*y_{r-1})^{1/2}\cdots (I_r-y_{1}^*y_{1})^{1/2}\end{pmatrix}
\end{equation}
establishes a diffeomorphism such that the image of the measure 
$\det(I_r-w^*w)^{ (n-2r+1)\cdot d/2-1   }dw$
under  $P^{-1}$ is 
 $\,\prod_{j=1}^{r}(1-\|y_j\|_2^2)^{(n-r-j+1)\cdot d/2-1}dy_1\ldots
dy_r$. Moreover, we show in Lemma \ref{det-P} below that 
$$\det P(y_1, \ldots, y_r)=\det\left(\begin{array}{cc} y_1\\ \vdots\\y_r
\end{array}\right).$$
We thus conclude that
\begin{equation}
\int_K |\det k_r|^{-2\epsilon}\> dk=\frac{1}{\kappa_r} \int_B \ldots\int_B 
\left|\det\left(\begin{array}{cc} y_1\\ \vdots\\y_r
\end{array}\right)\right|^{-2\epsilon}
\prod_{j=1}^{r}(1-\|y_j\|_2^2)^{ (n-r-j+1)\cdot d/2-1   }dy_1\ldots
dy_r.
\end{equation}
This integral is finite for $\epsilon<1/2$, as 
one can use Fubini with an one-dimensional inner integral
 w.r.t.~the (1,1)-variable. After this inner integration, no further
 singularities appear  from the determinant-part in the remaining integral.
\end{proof}

\begin{lemma}\label{det-P} Keep the notations of the preceding proof. 
For all $y_1,\ldots,y_n\in B$,
$$\det P(y_1, \ldots, y_r)=\det\left(\begin{array}{cc} y_1\\ \vdots\\y_r
\end{array}\right).$$
\end{lemma}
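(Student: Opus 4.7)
The plan is to exploit the simple spectral structure of each matrix $M_k := (I_r - y_k^* y_k)^{1/2}$: since $y_k^* y_k$ has rank at most one with the row $y_k$ as a left eigenvector of eigenvalue $\|y_k\|_2^2$, the row $y_k$ is a left eigenvector of $M_k$ with eigenvalue $\alpha_k := \sqrt{1 - \|y_k\|_2^2}$, while every row vector orthogonal to $y_k$ (in the standard Hermitian inner product on $\b F^r$) is fixed by $M_k$. The matrix $P(y_1, \ldots, y_r)$ differs from the target matrix with rows $y_1, \ldots, y_r$ only in that each row $j \ge 2$ is right-multiplied by a product $M_{j-1} \cdots M_1$ of such rank-one distortions, and the strategy is to strip off these factors one index at a time.

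Concretely, I would introduce the interpolating family
$$P_k := \begin{pmatrix} y_1 \\ \vdots \\ y_{k+1} \\ y_{k+2} M_{k+1} \\ \vdots \\ y_r M_{r-1} \cdots M_{k+1} \end{pmatrix}, \qquad k = 0, 1, \ldots, r-1,$$
so that $P_0 = P(y_1, \ldots, y_r)$ and $P_{r-1}$ is precisely the target matrix on the right-hand side of the claim. It then suffices to establish $\det P_k = \det P_{k+1}$ for every $k$.

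For this single step I observe that $P_k$ and $P_{k+1}$ coincide in the first $k+1$ rows (row $k+1$ being $y_{k+1}$ in both), and that for every $j \ge k+2$ the $j$-th row of $P_k$ is obtained from the $j$-th row $\beta_j$ of $P_{k+1}$ by right-multiplication with $M_{k+1}$. Decomposing $\beta_j = c_j y_{k+1} + w_j$ with $w_j y_{k+1}^* = 0$, the eigenvector identities yield $\beta_j M_{k+1} = c_j \alpha_{k+1} y_{k+1} + w_j$. Hence in both matrices the $y_{k+1}$-component of row $j$ is a scalar multiple of row $k+1$, so subtracting the appropriate multiples of row $k+1$ from each row $j \ge k+2$ reduces both $P_k$ and $P_{k+1}$ to the same matrix whose last $r-k-1$ rows are $w_{k+2}, \ldots, w_r$. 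Therefore $\det P_k = \det P_{k+1}$. The degenerate case $y_{k+1} = 0$ is immediate, since then $M_{k+1} = I_r$ and already $P_k = P_{k+1}$.

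The main obstacle I anticipate is simply the bookkeeping in the row-operation step; conceptually everything reduces to the rank-one structure of each $y_k^* y_k$ and the resulting pair of eigenvector identities for $M_k$, which lets the distortion be absorbed into a multiple of the pivot row $y_{k+1}$ without changing any determinant.
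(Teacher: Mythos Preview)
Your proof is correct and follows essentially the same approach as the paper: both arguments hinge on the observation that $M_k=(I_r-y_k^*y_k)^{1/2}$ acts on row vectors by scaling the $y_k$-component by $\sqrt{1-\|y_k\|_2^2}$ and fixing the orthogonal complement, and then strip off the factors $M_1,M_2,\ldots$ one at a time, using row $k+1$ (which equals $y_{k+1}$) as pivot to absorb the scalar change. The paper phrases the single step via multilinearity of the determinant and an induction, whereas you spell out the same step with explicit row operations and an interpolating family $P_0,\ldots,P_{r-1}$; the content is identical.
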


\begin{proof}
Fix $y_1\in B$. The mapping  $y\mapsto y(I_r-y_1^*y_1)^{1/2}$ on $B$
has the following form: If $y$ is written as $y=ay_1+y^\perp$ in a unique way
with $a\in\b F$ and $y^\perp\perp y_1$, then
  $y(I_r-y_1^*y_1)^{1/2}=\sqrt{1-\|y_1\|_2^2}\cdot a y_1+y^\perp$ 
(write $I_r-y_1^*y_1$ in an orthonormal basis with $y_1/\|y_1\|_2$ as a member!). Using
  linearity of the determinant in all lines, we thus conclude that
$$\det\begin{pmatrix}y_1\\ y_2(I_r-y_1^*y_1)^{1/2}\\ 
\vdots\\
  y_r(I_r-y_{r-1}^*y_{r-1})^{1/2}\cdots (I_r-y_{1}^*y_{1})^{1/2}\end{pmatrix}
=\det\begin{pmatrix}y_1\\ y_2\\ y_3(I_r-y_2^*y_2)^{1/2}\\ 
\vdots\\ y_r(I_r-y_{r-1}^*y_{r-1})^{1/2}\cdots
 (I_r-y_{2}^*y_{2})^{1/2}\end{pmatrix}.$$
The lemma now follows by an obvious induction.
\end{proof}

\section{Moments and the proof of a  central limit theorem}

In this section we prove Theorem \ref{main-clt} and
 related results. We start with some facts about the moment functions of Section 1.
The first result concerns an estimate for $m_{\bf 1}$.

\begin{lemma}\label{absch-moment-function-1}
For $r=1,\ldots,n$ let
$$s_r(g):= m_{(1,0,\ldots,0)}(g)+\cdots+  m_{(0,\ldots,0, 1,0,\ldots,0)}(g) \quad(g\in G)$$
be the sum of the first $r$ moment functions of first order. Moreover, let
$\sigma_1(a)\ge \ldots\ge \sigma_n(a)>0$ be the ordered eigenvalues of a positive definite $n\times n$ matrix $a$. 
Then:
\begin{enumerate}
\item[\rm{(1)}] $s_n(g)=\ln\det(gg^*)$.
\item[\rm{(2)}] There is a constant $C=C(n)$ such that for all $r=1,\ldots,n$ and $g\in G$,
$$0\le \ln\sigma_1(gg^*)+\ldots+\ln\sigma_r(gg^*) \> -\> s_r(g)\le C.$$
\item[\rm{(3)}]  There is a constant $C=C(n)$ such that for all $g\in G$
 $$\left\| 2\ln\sigma_{sing}(g) - m_{\bf 1}(g)\right\|\le C.$$
\end{enumerate}
\end{lemma}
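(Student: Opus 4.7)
All three parts hinge on the telescoping identity
$$s_r(g)=\int_K \ln\Delta_r(k^*gg^*k)\,dk\qquad(r=1,\ldots,n),$$
which is immediate from (\ref{moment-function}) once one observes that the logarithms of the quotients $\Delta_j/\Delta_{j-1}$ sum to $\ln\Delta_r$ (with the convention $\Delta_0:=1$). Specialising to $r=n$ and using $\Delta_n(k^*gg^*k)=\det(gg^*)$ independently of $k$ yields (1).

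For (2) I pass to a double-coset representative so that $gg^*=a:=\text{diag}(\sigma_1,\ldots,\sigma_n)$ with $\sigma_j=\sigma_j(gg^*)$ in decreasing order. Lemma \ref{pos-coeff} expresses $\Delta_r(k^*ak)$ as a convex combination of the monomials $a_{i_1}\cdots a_{i_r}$, each of which is dominated by $a_1\cdots a_r$ since the $\sigma_j$ are ordered. Thus $\Delta_r(k^*ak)\le\sigma_1\cdots\sigma_r$ pointwise in $k$, and integration gives the lower bound of (2), namely $s_r(g)\le\sum_{j=1}^r\ln\sigma_j(gg^*)$.

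For the upper bound I decompose $\ln\Delta_r(k^*ak)=\ln C_r(a)+\ln H_r(k,a)$ exactly as in Section~2. Since $a_1\cdots a_r$ is one of the $\binom{n}{r}$ summands of $\binom{n}{r}C_r(a)$,
$$\ln C_r(a)\ge \sum_{j=1}^r\ln\sigma_j(gg^*)-\ln\binom{n}{r}.$$
The proof of Theorem~\ref{osc} already established that $L_r=\int_K|\ln H_r(k,a)|^2\,dk$ is bounded by a constant depending only on $n$, uniformly in $a$. Cauchy--Schwarz then controls $\bigl|\int_K\ln H_r(k,a)\,dk\bigr|\le\sqrt{L_r}$ uniformly in $a$, and combining these two inequalities furnishes the constant $C=C(n)$ required in (2).

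Finally, (3) is a short algebraic consequence of~(2). Setting $S_r(g):=\sum_{j=1}^r\ln\sigma_j(gg^*)-s_r(g)$ with $S_0:=0$, part~(2) yields $0\le S_r(g)\le C$ for every $r$. The $r$-th coordinate of $m_{\bf 1}(g)$ equals $s_r(g)-s_{r-1}(g)$, while the $r$-th coordinate of $2\ln\sigma_{sing}(g)$ equals $\ln\sigma_r(gg^*)$; hence the $r$-th coordinate of $2\ln\sigma_{sing}(g)-m_{\bf 1}(g)$ is $S_r(g)-S_{r-1}(g)$, which lies in $[-C,C]$. Summing the $n$ squared coordinates produces the Euclidean bound. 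The only non-routine ingredient is the $a$-uniform control of $L_r$, but this is precisely the estimate already proved in Section~2, so no new analytic work is required.
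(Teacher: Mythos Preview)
Your argument is correct. Parts (1), the lower bound in (2), and (3) match the paper's reasoning essentially line by line (the paper only says ``(3) is a consequence of (2)'' and you have spelled out that deduction correctly via the telescoping differences $S_r-S_{r-1}$).

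For the upper bound in (2) you take a slightly different route from the paper. The paper argues pointwise in $k$: from Lemma~\ref{abs} one gets
\[
a_1\cdots a_r \le {n\choose r}C_r(a) \le \Delta_r(k^*ak)\cdot \max_{i_1<\ldots<i_r} c_{i_1,\ldots,i_r}(k)^{-1},
\]
takes logarithms, integrates over $K$, and is left with showing that $\int_K\ln(c_{1,\ldots,r}(k)^{-1})\,dk<\infty$, which is reduced (via Lemma~\ref{ln-est}) to Lemma~\ref{int-finite}. You instead split $\ln\Delta_r=\ln C_r(a)+\ln H_r(k,a)$, control $\ln C_r(a)$ below by $\sum_{j\le r}\ln\sigma_j-\ln{n\choose r}$, and bound $\bigl|\int_K\ln H_r\,dk\bigr|$ by $\sqrt{L_r}$ via Cauchy--Schwarz, citing the uniform bound on $L_r$ already obtained in the proof of Theorem~\ref{osc}. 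The two arguments rest on the same analytic input (Lemmas~\ref{abs}, \ref{ln-est}, \ref{int-finite}), but your packaging is a little cleaner: you recycle an estimate that Section~2 has already established rather than re-running part of its proof.
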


\begin{proof}
We may assume that $gg^*=diag(a_1,\ldots,a_n)$ with $a_l=\sigma_l(gg^*)$ ($l=1\ldots,n$).
 The integral representation (\ref{moment-function}) implies that 
$$s_r(g)=\int_K \ln\Delta_r(k^*gg^*k)\> dk.$$
This proves (1) and, in combination with Lemma \ref{pos-coeff}, the first inequality in (2). 
For the second  inequality of (2), we use the notations of Lemmas \ref{pos-coeff} and  \ref{abs}.
By the proof of Lemma \ref{abs}, we have for $k\in K$,
$$a_1\cdot a_2\ldots a_r\le {n \choose r} C_r(a_1,\ldots,a_n)\le
 \max_{1\le i_1<\ldots<i_r\le n} \frac{\ln\Delta_r(k^*gg^*k)}{c_{i_1,\ldots,i_r}(k)}.$$
Therefore,
$$\ln\sigma_1(gg^*)+\ldots+\ln\sigma_r(gg^*)=\int_K \ln(a_1\cdot a_2\ldots a_r)\> dk \le
\int_K \ln\Delta_r(k^*gg^*k)\> dk +M$$
for
 $$ M:=\int_K \max_{1\le i_1<\ldots<i_r\le n} \frac{1}{c_{i_1,\ldots,i_r}(k)}\> dk
\le \sum_{1\le i_1<\ldots<i_r\le n}\int_K\ln(c_{i_1,\ldots,i_r}(k)^{-1})\> dk.$$
As by the definition of $c_{i_1,\ldots,i_r}(k)$ all integrals in the sum are obviously equal,
 it suffices to show that
$$\int_K\ln(c_{1,\ldots,r}(k)^{-1})\> dk =-\int_K \ln\Delta_r\left(k^*
\left(\begin{array}{cc} I_r&0\\0&0\end{array}\right)k\right) \> dk$$
is finite. But this follows immediately from Lemma \ref{int-finite}. 
This proves (2). Finally, (3) is a consequence of (2).
\end{proof}

Lemma \ref{absch-moment-function-1}(3) implies that there exists $C=C(n)>0$
such that for all $g\in G$,
\begin{equation}
|e^{2i\lambda\cdot \ln\sigma_{sing}(g)}-e^{i\lambda\cdot m_{\bf 1}(g)}|\le C\cdot\|\lambda\|.
\end{equation}
Therefore, we conclude from Theorem \ref{osc}:

\begin{corollary}\label{cor-absch}
There exists a constant $C=C(n)>0$ such that for all $g\in G$,
$$\|\phi_{i\rho-\lambda}(g)-e^{2i\lambda\cdot \ln\sigma_{sing}(g)} \|\le  C\cdot\|\lambda\|.$$
\end{corollary}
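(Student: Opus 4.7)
The plan is to combine Theorem \ref{osc} with the displayed estimate
$$|e^{2i\lambda\cdot \ln\sigma_{sing}(g)}-e^{i\lambda\cdot m_{\bf 1}(g)}|\le C\cdot\|\lambda\|$$
that precedes the corollary, via a triangle inequality; the only subtlety is that Theorem \ref{osc} gives a quadratic bound in $\|\lambda\|$, which must be reconciled with the linear bound asserted in the corollary. This is a purely case-splitting matter, not a substantive obstacle.

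First I would invoke Theorem \ref{osc} (applied with the appropriate sign of $\lambda$ to match the parameter $i\rho-\lambda$) in order to obtain
$$\bigl|\phi_{i\rho-\lambda}(g)-e^{-i\lambda\cdot m_{\bf 1}(g)}\bigr|\le C_1\|\lambda\|^2,$$
with a constant $C_1=C_1(n)$ independent of $g$. Next, the elementary Lipschitz property $|e^{ix}-e^{iy}|\le|x-y|$ applied to $x=2\lambda\cdot \ln\sigma_{sing}(g)$ and $y=\lambda\cdot m_{\bf 1}(g)$ (up to a sign change matching the previous step) together with Lemma \ref{absch-moment-function-1}(3) yields a constant $C_2=C_2(n)$ such that
$$\bigl|e^{-i\lambda\cdot m_{\bf 1}(g)}-e^{2i\lambda\cdot \ln\sigma_{sing}(g)}\bigr|\le \|\lambda\|\cdot\bigl\|2\ln\sigma_{sing}(g)-m_{\bf 1}(g)\bigr\|\le C_2\|\lambda\|.$$
(Exactly this step is what the displayed equation preceding the corollary records.)

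A triangle inequality then gives
$$\bigl|\phi_{i\rho-\lambda}(g)-e^{2i\lambda\cdot \ln\sigma_{sing}(g)}\bigr|\le C_1\|\lambda\|^2+C_2\|\lambda\|.$$
To get the clean linear bound I would split cases. If $\|\lambda\|\le 1$, then $\|\lambda\|^2\le\|\lambda\|$ and the right-hand side is at most $(C_1+C_2)\|\lambda\|$. If $\|\lambda\|>1$, then since $\phi_{i\rho-\lambda}(g)$ is a spherical function with real parameter $-\lambda$ one has $|\phi_{i\rho-\lambda}(g)|\le 1$ by the remark following (\ref{int-rep}), and of course $|e^{2i\lambda\cdot\ln\sigma_{sing}(g)}|=1$, so the left-hand side is at most $2<2\|\lambda\|$. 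Taking $C:=\max(C_1+C_2,2)$ completes the proof.
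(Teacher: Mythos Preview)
Your proof is correct and follows exactly the paper's approach: the paper derives the corollary by combining Theorem~\ref{osc} with the displayed estimate $|e^{2i\lambda\cdot \ln\sigma_{sing}(g)}-e^{i\lambda\cdot m_{\bf 1}(g)}|\le C\|\lambda\|$ via the triangle inequality, just as you do. Your explicit case split $\|\lambda\|\le 1$ versus $\|\lambda\|>1$ (using $|\phi_{i\rho-\lambda}|\le 1$ in the latter case) cleanly handles the passage from the mixed bound $C_1\|\lambda\|^2+C_2\|\lambda\|$ to a pure linear bound, a detail the paper leaves implicit.
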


\begin{remark}
It can be easily checked (e.g. for $n=2$ from explicit formulas in \cite{K})
that the uniform orders $\|\lambda\|^2$ and $\|\lambda\|$ in Theorem \ref{osc}
and Corollary \ref{cor-absch} respectively are sharp. We note that Corollary
\ref{cor-absch} is closely related to the Harish-Chandra expansion of the
spherical functions; see e.g. Opdam \cite{O} and Lemma I.4.2.2 of \cite{HS} in
the context of Heckman-Opdam hypergeometric functions which includes our
setting.
 We also remark  that in the proof of  the CLT \ref{main-clt} below Corollary
\ref{cor-absch} would be sufficient instead of the stronger Theorem
\ref{osc}. On the other hand, Theorem \ref{osc} leads generally to stronger
rates of convergence in the CLT; see  e.g. Theorem 4.2 of \cite{V2} for the
rank one case.
\end{remark}

 We shall also need the following estimate which follows immediately from the
 integral representation (\ref{int-rep}):

\begin{lemma}\label{absch-abl}
For all $g\in G$ and  $l\in\b N_0^n$,
$$\left|\frac{\partial^{|l|}}{\partial\lambda^l} 
\phi_{i\rho-\lambda}(g)\right|\le m_l(g).$$
\end{lemma}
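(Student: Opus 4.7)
The plan is to differentiate the Harish-Chandra integral representation (\ref{int-rep}) under the integral sign. I would first substitute $\lambda\mapsto -\lambda$ in (\ref{int-rep}) and telescope the exponents via the identity
$$\sum_{r=1}^n(\lambda_r-\lambda_{r+1})\ln\Delta_r=\sum_{r=1}^n\lambda_r\ln(\Delta_r/\Delta_{r-1})$$
(with the conventions $\lambda_{n+1}=0$ and $\Delta_0\equiv 1$), yielding
$$\phi_{i\rho-\lambda}(g)=\int_K\exp\bigl(-i\sum_{r=1}^n\lambda_r L_r(k,g)\bigr)\,dk,\qquad L_r(k,g):=\ln\frac{\Delta_r(k^*gg^*k)}{\Delta_{r-1}(k^*gg^*k)}.$$

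The natural next step is to differentiate the integrand $l_r$ times in $\lambda_r$ for each $r$; since each such differentiation pulls down a factor $-iL_r(k,g)$, this produces
$$\frac{\partial^{|l|}}{\partial\lambda^l}\phi_{i\rho-\lambda}(g)=(-i)^{|l|}\int_K\prod_{r=1}^n L_r(k,g)^{l_r}\exp\bigl(-i\sum_{s=1}^n\lambda_s L_s(k,g)\bigr)\,dk.$$
Taking the modulus inside the integral and using $|\exp(-i\cdot\mathrm{real})|=1$, the right-hand side is dominated by $\int_K\prod_{r=1}^n|L_r(k,g)|^{l_r}\,dk$, which is precisely the integrand for $m_l(g)$ in (\ref{moment-function}); this gives the claimed inequality.

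The only technical step to justify is the exchange of the $\lambda$-differentiation and the $K$-integral. This is handled by standard dominated convergence once one knows that $|L_r(\cdot,g)|^{l_r}\in L^1(K)$ for each fixed $g\in G$, and the key estimate for this integrability is precisely the one established in the proof of Theorem \ref{osc}: Lemmas \ref{pos-coeff} and \ref{abs} together with Lemma \ref{ln-est} reduce the question to the finiteness of $\int_K c_{i_1,\ldots,i_r}(k)^{-\epsilon}\,dk$, which is settled by Lemma \ref{int-finite}. I expect no further obstacle beyond this routine justification.
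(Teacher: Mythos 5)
Your argument is correct and is essentially the paper's (unstated) one: the paper treats the lemma as an immediate consequence of the integral representation (\ref{int-rep}), i.e.\ differentiate under the integral and use $|e^{ix}|=1$, and your justification of the interchange via Lemmas \ref{pos-coeff}, \ref{ln-est} and \ref{int-finite} is a welcome extra level of care. One caveat: the bound you actually obtain is $\int_K\prod_r|L_r(k,g)|^{l_r}\,dk$, whereas the integrand in (\ref{moment-function}) carries no absolute values; since $L_r$ can be negative, these agree only when all $l_r$ are even (or the integrand happens to be nonnegative), and indeed for odd $l_r$ the literal statement of the lemma can fail already at $\lambda=0$, where it would force $m_l(g)\ge0$. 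So what you have proved is the corrected form of the lemma with $m_l$ replaced by the absolute moment $\int_K\prod_r|L_r|^{l_r}\,dk$ --- which is exactly the dominating function needed in Lemma \ref{differentiable}, so the discrepancy is with the paper's wording rather than with your argument.
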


Let $m\in\b N_0$ and $\nu\in M^1(G)$ a $K$-biinvariant probability measure. We
say that  $\nu$ admits finite  $m$-th modified moments if in the notation of
the introduction on the moment functions, 
$$m_{(m,0,\ldots,0)}, m_{(0,m,0,\ldots,0)},\ldots, m_{(0,\ldots,0,m)}\in L^1(G,\nu).$$
It follows immediately from (\ref{moment-function}) and H\"older's inequality that
in this case all moment functions of order at most $m$ are $\nu$-integrable.
Moreover, this moment condition implies a corresponding differentiability of
the spherical Fourier transform of  $\nu$:

\begin{lemma}\label{differentiable}
Let $m\in\b N_0$ and $\nu\in M^1(G)$ a $K$-biinvariant probability measure with finite $m$-th moments.
 Then the spherical Fourier transform
$$\tilde\nu:\b R^n\to\b C,\quad \lambda\mapsto\int_G \phi_{i\rho-\lambda}(g)\> d\nu(g)$$
is $m$-times continuously partially differentiable, and for all $l\in\b N_0^n$ with $|l|\le m$,
\begin{equation}\label{tauschallg}
\frac{\partial^{|l|}}{\partial\lambda^l}\tilde\nu(\lambda)=\int_G\frac{\partial^{|l|}}{\partial\lambda^l} 
\phi_{i\rho-\lambda}(g)\> d\nu(g).
\end{equation}
In particular,
\begin{equation}\label{tausch}
\frac{\partial^{|l|}}{\partial\lambda^l}\tilde\nu(0)=(-i)^{|l|} \int_G m_l(g)\> d\nu(g).
\end{equation}
\end{lemma}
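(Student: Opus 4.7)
The plan is a straightforward induction on $|l|$, using dominated convergence applied to the $\nu$-integral over $g$. The key input is Lemma \ref{absch-abl}, which supplies the uniform-in-$\lambda$, $\nu$-integrable dominating function $m_l(g)$ for $|\partial^{|l|}/\partial\lambda^l\,\phi_{i\rho-\lambda}(g)|$; by the observation immediately preceding the statement, finite $m$-th modified moments of $\nu$ imply $m_l\in L^1(G,\nu)$ for every $|l|\le m$. This is precisely the data one needs to differentiate under the integral sign iteratively.

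The base case $|l|=0$ is the definition of $\tilde\nu$. For the induction step, assume (\ref{tauschallg}) holds for some $l$ with $|l|<m$, fix a coordinate direction $e_j$, and observe that $\lambda\mapsto\phi_{i\rho-\lambda}(g)$ is smooth for each fixed $g$ since one may differentiate under the compact Haar integral in (\ref{int-rep}). The fundamental theorem of calculus then gives, for $h\neq 0$,
$$\frac{1}{h}\left(\frac{\partial^{|l|}}{\partial\lambda^l}\phi_{i\rho-(\lambda+he_j)}(g)-\frac{\partial^{|l|}}{\partial\lambda^l}\phi_{i\rho-\lambda}(g)\right)=-\frac{1}{h}\int_0^h\frac{\partial^{|l|+1}}{\partial\lambda^l\,\partial\lambda_j}\phi_{i\rho-(\lambda+te_j)}(g)\,dt.$$
By Lemma \ref{absch-abl} the right-hand side is bounded in modulus by $m_{l+e_j}(g)$, uniformly in $h$ and in $\lambda$ on bounded sets, and this majorant lies in $L^1(G,\nu)$ because $|l+e_j|\le m$. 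Letting $h\to 0$ and applying dominated convergence to the $\nu$-integral (the inductive hypothesis supplies the identification of the left-hand side integrated against $\nu$ as a difference quotient of $\partial^{|l|}/\partial\lambda^l\,\tilde\nu$) yields (\ref{tauschallg}) with $l$ replaced by $l+e_j$. A second dominated-convergence argument, with the same majorant $m_{l+e_j}$ and using pointwise continuity of the integrand in $\lambda$, proves continuity of the resulting derivative, closing the induction.

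Formula (\ref{tausch}) is then (\ref{tauschallg}) at $\lambda=0$ together with the chain-rule identity
$$\frac{\partial^{|l|}}{\partial\lambda^l}\phi_{i\rho-\lambda}(g)\Big|_{\lambda=0}=(-i)^{|l|}\,m_l(g),$$
which follows by differentiating (\ref{int-rep}) termwise and comparing with (\ref{moment-function}); the factor $(-i)^{|l|}$ accounts for the $-\lambda$ vs.\ $-i\lambda$ conventions in the two definitions. There is no serious obstacle: the whole argument rests on having a $\lambda$-uniform, $\nu$-integrable majorant, and this is exactly what Lemma \ref{absch-abl} combined with the finite $m$-th moment hypothesis provides.
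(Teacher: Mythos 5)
Your proof is correct and takes essentially the same route as the paper: an induction on the order of differentiation, with Lemma \ref{absch-abl} supplying the $\lambda$-uniform, $\nu$-integrable majorant $m_l$ that justifies interchanging a further partial derivative with the $\nu$-integral, and the factor $(-i)^{|l|}$ in (\ref{tausch}) coming from the $-\lambda$ versus $-i\lambda$ parametrizations. The paper simply cites ``the well-known result about parameter integrals'' where you spell out the difference-quotient and dominated-convergence details.
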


\begin{proof} We proceed by induction: The case $m=0$ is trivial, and for $m\to m+1$
 we observe that by our assumption all moments of lower order exist, i.e., (\ref{tauschallg}) is available for
all $|l|\le m$. It now follows from Lemma \ref{absch-abl} and the well-known result about parameter integrals that
a further partial derivative and the integration can be interchanged. Finally, (\ref{tausch}) follows from
 (\ref{tauschallg}) and (\ref{moment-function}). Continuity of the derivatives
is also clear by Lemma \ref{absch-abl}.
\end{proof}

We next turn to the positive (semi)definiteness of the modified covariance
 matrix $\sigma^2(\nu)$
for biinvariant measures with finite second modified moments. We start with measures concentrated on a 
double coset:

\begin{lemma}\label{posdef-moment-function} Let $n\ge2$, $g\in G$, and
 $\Sigma^2(g):=m_{\bf 2}(g)-m_{\bf 1}(g)^t m_{\bf 1}(g)$.
\begin{enumerate}
\item[\rm{(1)}]
$\Sigma^2(g)$
 is positive semidefinite.
\item[\rm{(2)}] If  $gg^*$ is not a multiple of the identity matrix, then 
$\Sigma^2(g)$ has rank $n-1$. 
\item[\rm{(3)}] If   $gg^*$ is  a multiple of the identity matrix, then 
$\Sigma^2(g)=0$.
\end{enumerate}
\end{lemma}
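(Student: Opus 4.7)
The plan is to recognize $\Sigma^2(g)$ as an honest covariance matrix of a random vector on the probability space $(K,dk)$, which makes (1) and (3) essentially immediate, and then to pin down the kernel precisely using permutation matrices inside $K$ for part (2).

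Concretely, for $r = 1,\ldots,n$, set
\[
f_r(k) := \ln\!\left(\frac{\Delta_r(k^*gg^*k)}{\Delta_{r-1}(k^*gg^*k)}\right), \qquad k\in K,
\]
with $\Delta_0 \equiv 1$. Since $gg^*$ is positive definite, so is $k^*gg^*k$, hence by Sylvester's criterion each $\Delta_r(k^*gg^*k) > 0$ and $f_r$ is continuous on $K$. Formula (\ref{moment-function}) then reads $m_l(g) = \int_K \prod_r f_r(k)^{l_r}\,dk$, so that $m_{\bf 1}(g)_r = \int_K f_r\,dk$ and $m_{\bf 2}(g)_{r,s} = \int_K f_r f_s\,dk$. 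Thus $\Sigma^2(g)$ is exactly the covariance matrix of the $\mathbb R^n$-valued random variable $F := (f_1,\ldots,f_n)$ on $(K,dk)$, which proves (1). For (3), if $gg^* = \alpha I_n$ then $k^*gg^*k = \alpha I_n$ for every $k$, so $\Delta_r(k^*gg^*k) = \alpha^r$ and $f_r \equiv \ln\alpha$; the random vector $F$ is a.s. constant and its covariance vanishes.

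The main work is in (2). Since $\Sigma^2(g)$ is a covariance matrix, its kernel equals the set of $c\in\mathbb R^n$ for which $c\cdot F$ is $dk$-almost everywhere constant; by continuity this means constant on all of $K$. Summation by parts gives
\[
c\cdot F(k) \;=\; c_n\ln\Delta_n(k^*gg^*k) + \sum_{r=1}^{n-1}(c_r - c_{r+1})\ln\Delta_r(k^*gg^*k),
\]
and because $\Delta_n(k^*gg^*k) = \det(gg^*)$ is already constant, we see $(1,\ldots,1)^t\in\ker\Sigma^2(g)$, giving $\mathrm{rank}\,\Sigma^2(g)\le n-1$. It remains to show the reverse: writing $d_r := c_r - c_{r+1}$, I have to show that if $\sum_{r=1}^{n-1} d_r \ln\Delta_r(k^*gg^*k)$ is constant on $K$ and $gg^*$ is not a multiple of the identity, then $d_1 = \cdots = d_{n-1} = 0$.

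The key device is to test the constancy condition on permutation matrices. We may assume $gg^* = \mathrm{diag}(a_1,\ldots,a_n)$ with not all $a_l$ equal, so that the logarithms $y_i := \ln a_i$ are not all equal. For a permutation matrix $P_\sigma\in K$ one computes $\Delta_r(P_\sigma^* gg^* P_\sigma) = a_{\sigma(1)}\cdots a_{\sigma(r)}$, and the constancy condition becomes
\[
\sum_{r=1}^{n-1} d_r\bigl(y_{\sigma(1)}+\cdots+y_{\sigma(r)}\bigr) \;=\; \mathrm{const}, \qquad \sigma\in S_n.
\]
Taking the difference of this identity for $\sigma$ and for $\sigma\circ(i,i+1)$ cleanly isolates one term and yields $d_i\,(y_{\sigma(i+1)} - y_{\sigma(i)}) = 0$ for every $\sigma\in S_n$ and every $i\in\{1,\ldots,n-1\}$. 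Since the $y_j$ are not all equal, for each $i$ I can choose $\sigma$ with $y_{\sigma(i)} \neq y_{\sigma(i+1)}$, forcing $d_i = 0$. Hence the kernel is exactly the line $\mathbb R\cdot(1,\ldots,1)^t$ and $\Sigma^2(g)$ has rank $n-1$, proving (2).

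The only nontrivial step is this last one. The obstacle is that "constant on $K$" is a continuous, infinite-dimensional condition, so the cleanest way forward is to restrict to the finite subgroup of permutation matrices — these are contained in both $U_n(\mathbb R)$ and $U_n(\mathbb C)$, the principal minors specialize to products $a_{\sigma(1)}\cdots a_{\sigma(r)}$, and the transposition trick unravels the $d_i$'s one index at a time.
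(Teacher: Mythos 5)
Your proof is correct, and parts (1) and (3) coincide with the paper's argument: $\Sigma^2(g)$ is the covariance matrix of the vector $(f_1,\ldots,f_n)$ of successive log-minor differences on $(K,dk)$, so positive semidefiniteness is Cauchy--Schwarz and (3) is immediate. For (2) both you and the paper reduce, via summation by parts and the constancy of $\ln\Delta_n(k^*gg^*k)$, to showing that $\sum_{r=1}^{n-1} d_r\ln\Delta_r(k^*gg^*k)$ constant on $K$ forces $d_1=\cdots=d_{n-1}=0$, and both test this on permutation matrices; but the finishing arguments differ. The paper (Corollary \ref{lin-unab}) evaluates the functions at the $n$ transpositions $(1,l)$, assembles the resulting $n\times n$ matrix, and shows its determinant is $(x_1+\cdots+x_n)\prod_{j\ge2}(x_1-x_j)\neq0$ via the explicit computation of Lemma \ref{det-rechnung}; this requires $\det(gg^*)\neq1$ and a separate rescaling trick ($a\mapsto 2a$) to cover $\det a=1$. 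You instead difference the constancy identity over $\sigma$ and $\sigma\circ(i,i+1)$, which isolates $d_i\,(y_{\sigma(i)}-y_{\sigma(i+1)})=0$ one index at a time. Your route avoids the determinant evaluation and the $\det a=1$ case split entirely, at the cost of not producing the quantitative nondegeneracy (the explicit nonzero determinant) that the paper's appendix records; both are valid, and yours is the more elementary. One small point worth making explicit in a write-up: a continuous function on $K$ that is constant $dk$-a.e.\ is constant everywhere because Haar measure has full support (not because $K$ is connected --- $U_n(\mathbb R)=O(n)$ is not).
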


\begin{proof}
Let $a_1,\ldots,a_n\in\b R$ with $a_1^2+\ldots+ a_n^2>0$ and the row vector $a=(a_1,\ldots,a_n)$.
Put
$$f_1(k):=\ln\Delta_1(k^*gg^*k)\quad \text{and}
\quad
f_l(k):=\ln\Delta_l(k^*gg^*k)-\ln\Delta_{l-1}(k^*gg^*k)\quad(l=2,\cdots,n).$$
Then, by (\ref{moment-function}), (\ref{m1-vector}),  (\ref{m2-matrix}), and the Cauchy-Schwarz inequality,
$$a\left( m_{\bf 2}(g)-m_{\bf 1}(g)^t m_{\bf 1}(g)\right)a^t
= \int_K \left(\sum_{l=1}^n a_l f_l(k)\right)^2\> dk -\left(\int_K \sum_{l=1}^n a_l f_l(k)\> dk\right)^2
\ge0.$$
Moreover, this expression is equal to $0$ if and only if the function 
$$k\mapsto \sum_{l=1}^n a_l f_l(k)= (a_1-a_2)\ln\Delta_1(k^*gg^*k)+\dots+ (a_{n-1}-a_n)\ln\Delta_{n-1}(k^*gg^*k)
+a_n\ln\Delta_n(k^*gg^*k)$$
 is constant on $K$. As $k\mapsto\ln\Delta_n(k^*gg^*k)$ is constant on $K$, and as under the condition of (2),
the functions $k\mapsto\ln\Delta_r(k^*gg^*k)$ ($r=1,\ldots,n-1$) and the constant function $1$ are linearly
 independent on $K$ by Corollary \ref{lin-unab} in the appendix, the function 
 $k\mapsto \sum_{l=1}^n a_l f_l(k)$ is constant on $K$ precisely for $a_1=a_2=\ldots=a_n$. This proves (2).
Part (3) is obvious.
\end{proof}

The arguments of the preceding proof lead to the following characterization of
$K$-biinvariant measures with positive definite covariance matrices:

\begin{lemma}\label{pos-def-co} Let $\nu\in M^1(G)$ be a $K$-biinvariant probability measure
  having second modified moments. Then $\Sigma^2(\nu)$ is positive definite if
 and only if  $supp\> \nu$ is not contained in the subgroups $\{cI_n: \> c\in\b
 F, \> c\ne0\}$
  and  $SL_n(\b F)$.
\end{lemma}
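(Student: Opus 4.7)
The plan is to imitate the quadratic-form analysis from the proof of Lemma~\ref{posdef-moment-function} but applied to the $\nu$-averaged matrix $\Sigma^2(\nu)$. For $a=(a_1,\ldots,a_n)\in\b R^n$ (with the convention $a_{n+1}:=0$) put
$$F_a(k,g):=\sum_{r=1}^{n-1}(a_r-a_{r+1})\ln\Delta_r(k^*gg^*k)\;+\;a_n\ln\det(gg^*).$$
Expanding the entries of $m_{\bf 1}(\nu)$ and $\int_G m_{\bf 2}(g)\,d\nu$ via the integral representation~(\ref{moment-function}) and applying Fubini gives
$$a\,\Sigma^2(\nu)\,a^t=\int_G\int_K F_a(k,g)^2\,dk\,d\nu(g)\;-\;\left(\int_G\int_K F_a(k,g)\,dk\,d\nu(g)\right)^2={\rm Var}_{dk\otimes d\nu}(F_a),$$
which is $\ge 0$. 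Consequently $\Sigma^2(\nu)$ fails to be positive definite precisely when some nonzero $a\in\b R^n$ makes $F_a$ constant $(dk\otimes d\nu)$-almost everywhere.

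For the easy direction (support contained $\Rightarrow$ $\Sigma^2(\nu)$ singular) I would exhibit explicit witnesses. If ${\rm supp}\,\nu\subset SL_n(\b F)$, then $\ln\det(gg^*)=2\ln|\det g|=0$ $\nu$-a.e., and $a=(1,\ldots,1)$ annihilates every difference $a_r-a_{r+1}$, yielding $F_a\equiv0$. If ${\rm supp}\,\nu\subset\{cI_n:c\ne 0\}$ (read $K$-biinvariantly as $gg^*=\sigma(g)I_n$ scalar $\nu$-a.e.), then $\Delta_r(k^*gg^*k)=\sigma(g)^r$, a short telescoping calculation collapses $F_a(k,g)$ to $(a_1+\cdots+a_n)\ln\sigma(g)$, and any nonzero $a$ with $\sum_ra_r=0$, e.g.~$(1,-1,0,\ldots,0)$, gives $F_a\equiv 0$.

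The converse is the substantive step and is where the main obstacle lies. Assume $F_a$ is constant $(dk\otimes d\nu)$-a.e.\ for some $a\ne 0$; continuity of $F_a(\cdot,g)$ on $K$ and closedness of ${\rm supp}\,\nu$ upgrade this to $k\mapsto F_a(k,g)$ being constant on $K$ for every $g\in{\rm supp}\,\nu$. Since $a_n\ln\det(gg^*)$ is $k$-independent, the remaining sum $\sum_{r=1}^{n-1}(a_r-a_{r+1})\ln\Delta_r(k^*gg^*k)$ must be constant in $k$. The crux is then to invoke Corollary~\ref{lin-unab} from the appendix, which asserts that $1,\ln\Delta_1(k^*gg^*k),\ldots,\ln\Delta_{n-1}(k^*gg^*k)$ are linearly independent on $K$ whenever $gg^*$ is not a scalar; this forces $a_1=\cdots=a_n$ for every such $g\in{\rm supp}\,\nu$. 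A dichotomy then falls out: either $gg^*$ is a scalar matrix for every $g\in{\rm supp}\,\nu$ (the first excluded situation), or there exists $g\in{\rm supp}\,\nu$ with $gg^*$ non-scalar, in which case $a_1=\cdots=a_n=:\alpha\ne 0$ and constancy of $F_a=2\alpha\ln|\det g|$ forces $|\det g|$ to be constant $\nu$-a.e., the second excluded situation. Apart from the appeal to Corollary~\ref{lin-unab}, everything reduces to the variance identity above plus the choice of the two witnesses.
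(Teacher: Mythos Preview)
Your approach---writing $a\,\Sigma^2(\nu)\,a^t$ as the variance of $F_a$ with respect to $dk\otimes d\nu$ and then invoking Corollary~\ref{lin-unab}---is precisely the route the paper intends; the paper itself gives no separate argument beyond the sentence ``the arguments of the preceding proof lead to the following characterization''. The variance identity, the two explicit witnesses for the easy direction, and the use of the linear-independence corollary are all correct.

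There is, however, one genuine loose end at the very last step. What your argument actually produces in the second branch is that $|\det g|$ is \emph{constant} on $\mathrm{supp}\,\nu$, and you label this ``the second excluded situation''. But the lemma as stated requires $\mathrm{supp}\,\nu\subset SL_n(\b F)$, i.e.\ $\det g\equiv 1$, which is strictly stronger. In fact your computation uncovers a small imprecision in the statement: take $\nu$ to be the $K$-biinvariant probability measure concentrated on a single double coset $Kg_0K$ with $g_0g_0^*$ non-scalar and $|\det g_0|\ne 1$ (for instance $g_0=\mathrm{diag}(2,1,\ldots,1)$). Then $|\det g|$ is constant on the support, so $a=(1,\ldots,1)$ annihilates the quadratic form and $\Sigma^2(\nu)$ is singular; yet $\mathrm{supp}\,\nu$ is contained neither in the scalar matrices nor in $SL_n(\b F)$. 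The correct second exclusion is therefore ``$|\det g|$ is constant on $\mathrm{supp}\,\nu$'' (equivalently, the support lies in a single level set $\{|\det g|=c\}$), and that is exactly what your argument establishes. So your proof is sound; it is the formulation of the lemma that needs the minor repair.

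One small wording issue: when you pass from ``$F_a$ constant $(dk\otimes d\nu)$-a.e.'' to a pointwise statement, make explicit that $F_a(k,g)$ equals one \emph{common} constant $C$ throughout $K\times\mathrm{supp}\,\nu$, not merely that $k\mapsto F_a(k,g)$ is constant for each fixed $g$. The latter is automatic once $a_1=\cdots=a_n$ (since $\ln\det(gg^*)$ is $k$-independent) and would not by itself force $|\det g|$ to be constant across different $g$.
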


We now turn to the proof of the CLT:

 \begin{proof}[Proof of  Theorem \ref{main-clt}]
 Let $\nu\in M^1(G)$ be a $K$-biinvariant probability measure with finite second modified moments.
Let $(X_k)_{k\ge1}$ be i.i.d.~$G$-valued 
random variables with distribution $\nu$ and $S_k:=X_1\cdot X_2\cdots X_k$. Let $\lambda\in\b R^n$.
As the functions $\phi_{i\rho-\lambda}$ are bounded on $G$ (by the integral representation (\ref{int-rep}))
and multiplicative w.r.t.~$K$-biinvariant measures, we have
$$E(\phi_{i\rho-\lambda/\sqrt k}(S_k))= \int_G \phi_{i\rho-\lambda/\sqrt k}(g)\> d\nu^{(k)}(g)=
\left(\int_G \phi_{i\rho-\lambda/\sqrt k}(g)\> d\nu(g)\right)^k =\tilde\nu(\lambda/\sqrt k)^k.$$
We now use Taylor's formula, Lemma \ref{differentiable}, and
$$m_{\bf 2}(\nu):=\int_G m_{\bf 2}(g)\> d\nu(g)=\Sigma^2(\nu)+m_{\bf 1}(\nu)^tm_{\bf 1}(\nu)$$
and obtain
\begin{align}
exp(&i\lambda\cdot m_{\bf 1}(\nu) \sqrt k)\cdot E(\phi_{i\rho-\lambda/\sqrt k}(S_k))\quad =\quad
\left(exp(i\lambda\cdot m_{\bf 1}(\nu)/ \sqrt k)\cdot\tilde\nu(\lambda/\sqrt k)\right)^k
\\
=&\left(\left[1+\frac{i \lambda\cdot m_{\bf 1}(\nu)}{ \sqrt k}- 
\frac{(\lambda\cdot m_{\bf 1}(\nu))^2}{ 2 k} +o(\frac{1}{k})\right]\cdot
\left[1-\frac{i \lambda\cdot m_{\bf 1}(\nu)}{ \sqrt k}-\frac{\lambda m_{\bf 2}(\nu)\lambda^t}{ 2 k} +o(\frac{1}{k})\right]
\right)^k
\notag \\
=&\biggl(\left[1+\frac{i \lambda\cdot m_{\bf 1}(\nu)}{ \sqrt k}- 
\frac{(\lambda\cdot m_{\bf 1}(\nu))^2}{ 2 k} +o(\frac{1}{k})\right]\cdot
\notag \\
&\quad\quad \times\left[1-\frac{i \lambda\cdot m_{\bf 1}(\nu)}{ \sqrt k}-
\frac{\lambda (\Sigma^2(\nu)+ m_{\bf 1}(\nu)^t m_{\bf 1}(\nu))\lambda^t}{ 2 k} +o(\frac{1}{k})\right]
\biggr)^k
\notag \\
=&\left( 1-\frac{\lambda \Sigma^2(\nu)\lambda^t}{ 2 k} +o(\frac{1}{k})\right)^k.
\notag
\end{align}
Therefore,
$$\lim_{k\to\infty} exp(i\lambda\cdot m_{\bf 1}(\nu) \sqrt k)\cdot E(\phi_{i\rho-\lambda/\sqrt k}(S_k))
= exp(-\lambda  \Sigma^2(\nu)\lambda^t/ 2 ).$$
Moreover, by Theorem \ref{osc},
$$\lim_{k\to\infty} E\left( \phi_{i\rho-\lambda/\sqrt k}(S_k)- exp(-i\lambda\cdot m_{\bf 1}(S_k)/\sqrt k)\right) =0.$$
We conclude that
$$\lim_{k\to\infty} exp(-i\lambda\cdot ( m_{\bf 1}(S_k)- k\cdot m_{\bf 1}(\nu) )/\sqrt k)=
 exp(-\lambda  \Sigma^2(\nu)\lambda^t/ 2 )$$
for all $\lambda\in\b R^n$. Levy's continuity theorem for the classical $n$-dimensional
 Fourier transform now implies that
$( m_{\bf 1}(S_k)-k\cdot m_{\bf 1}(\nu))/\sqrt k$ tends in distribution to $N(0, \Sigma^2(\nu))$.
By the estimate of Lemma \ref{absch-moment-function-1}(1), this immediately implies Theorem
\ref{main-clt}.
\end{proof}

On the basis of Theorem \ref{osc}, also a  Berry-Esseen-type estimate with
 the order $ O(k^{-1/3})$ of convergence  can be derived.
 As the details are technical, but quite similar to the proof 
of the corresponding rank-one-case in Theorem 4.2 of \cite{V2}, we here omit
details.
We also mention that Theorem \ref{osc} can be also used to derive further CLTs
e.g. with stable distributions with domains of attraction or a Lindeberg-Feller CLT. The details of proof
then would be also very similar to the classical cases for sums of iid random
variables.

\section{Appendix}

Here we collect some results from linear algebra which are needed in Section 3.

\begin{lemma}\label{det-rechnung}
Let $x_1,\ldots,,x_n\in\b R$. Then 
\begin{align}
\det&\left(\begin{array}{cccccc}  x_1&x_2&x_3&x_4&\cdots&x_n
\\ x_1+x_2& x_2+x_1&  x_3+x_2&  x_4+x_2&\cdots&x_n+x_2
\\x_1+x_2+x_3& x_2+x_1+x_3&  x_3+x_2+x_1&  x_4+x_2+x_3&\cdots&x_n+x_2+x_3
\\ \vdots &\vdots &\vdots &\vdots &\vdots &\vdots \\
\sum_{l=1}^nx_l &\sum_{l=1}^nx_l&\sum_{l=1}^nx_l&\sum_{l=1}^nx_l&\cdots&
\sum_{l=1}^nx_l
 \end{array}\right) =
\notag\\
&=(x_1+x_2+\cdots+ x_n)\cdot(x_1-x_2)\cdot(x_1-x_3)\cdots(x_1-x_n)
\notag
\end{align}
\end{lemma}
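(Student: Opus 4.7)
The plan is to reduce the matrix $M$ whose determinant is in question to a very sparse form by elementary row and column operations, and then expand. First I would record an explicit formula for $M$: its $(i,j)$ entry equals $x_1+x_2+\cdots+x_i$ when $j\le i$, and equals $x_j+x_2+x_3+\cdots+x_i$ when $j>i$ (the second sum being empty when $i=1$). This is just a matter of reading off the given rows.

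Next I would perform the row reduction that replaces row $i$ by (row $i$)$-$(row $i-1$) for $i=2,3,\ldots,n$; this leaves $\det M$ unchanged. A direct substitution into the explicit formula shows that the resulting matrix $N$ has first row $(x_1,x_2,\ldots,x_n)$ and, for each $i\ge 2$, its $i$-th row has $x_1$ in column $i$ and $x_i$ in every other column:
$$N=\begin{pmatrix} x_1 & x_2 & x_3 & \cdots & x_n \\ x_2 & x_1 & x_2 & \cdots & x_2 \\ x_3 & x_3 & x_1 & \cdots & x_3 \\ \vdots & & & \ddots & \vdots \\ x_n & x_n & x_n & \cdots & x_1 \end{pmatrix}.$$

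Then I would subtract column $1$ from each of columns $2,\ldots,n$. This zeros out every entry $(i,j)$ with $i,j\ge 2$ and $i\ne j$, leaves $x_1-x_i$ in position $(i,i)$ for $i\ge 2$, and puts $x_j-x_1$ into the top row at position $(1,j)$. Adding rows $2,3,\ldots,n$ to row $1$ turns the top row into $(x_1+x_2+\cdots+x_n,\,0,\,0,\ldots,0)$, because in each column $j\ge 2$ the only nonzero entry outside row $1$ is $x_1-x_j$ in row $j$. Expansion along this new first row then gives
$$\det M=(x_1+x_2+\cdots+x_n)\cdot\det\bigl(\mathrm{diag}(x_1-x_2,\,x_1-x_3,\ldots,x_1-x_n)\bigr),$$
which is precisely the claimed identity.

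The only real obstacle is bookkeeping: spotting the explicit formula for $M_{ij}$ and observing that a single round of consecutive row subtractions collapses the block pattern of $M$ into the highly symmetric matrix $N$. Once $N$ is in hand, every subsequent step is mechanical and the product structure on the right-hand side appears automatically.
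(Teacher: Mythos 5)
Your proof is correct, and it takes a genuinely different route from the paper. You reduce the matrix by explicit elementary operations: consecutive row differences collapse it to the symmetric matrix $N$ (note these subtractions must use the \emph{original} rows, i.e.\ be performed from the bottom row upward, since doing them sequentially top-down would subtract already-modified rows — the determinant is preserved either way, but only the former yields your $N$), and then column subtractions plus one row addition produce a triangular matrix whose determinant is read off. The paper instead argues structurally: the determinant is a homogeneous polynomial of degree $n$ in which $x_1^n$ has coefficient $1$, and as a polynomial in $x_1$ it vanishes at $x_1=x_j$ (two equal columns) and at $x_1=-(x_2+\cdots+x_n)$ (last row vanishes), which forces the claimed factorization. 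The paper's argument is shorter but leaves the reader to verify the leading coefficient and to handle, by a density argument, the degenerate case where the exhibited roots coincide; your computation is longer in bookkeeping but fully mechanical, needs no genericity considerations, and produces the factorization constructively. Both are valid proofs of the lemma.
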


\begin{proof} The determinant is a homogeneous polynomial in the the variables $x_1,\ldots,x_n$  of degree $n$.  
Moreover, the monomial $x_1^n$ appears in this polynomial with coefficient 1, and 
for given $x_2,\ldots,x_n$, the determinant is a polynomial in the variable $x_1$ where 
 $-(x_2+\cdots+ x_n)$, $x_2$, $x_3$,\ldots, $x_n$ are the zeros of this polynomial. This leads readily to the claim.
\end{proof} 

\begin{corollary}\label{lin-unab}
Let $a_1,\ldots,a_n>0$ numbers such that at least two of them are different.
 Consider the diagonal matrix $a=diag(a_1,\ldots,a_n)$.
 Then the functions $k\mapsto\ln\Delta_r(k^*ak)$  
with $r=1,\ldots,n-1$ and the constant function 1 on $K=U_n(\b F)$  are linearly independent.
\end{corollary}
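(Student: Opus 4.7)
The plan is to assume a linear relation
$c_0+\sum_{r=1}^{n-1}c_r\ln\Delta_r(k^*ak)=0$
holds for all $k\in K$, and to deduce that every $c_r$ vanishes by restricting $k$ to convenient one-parameter subgroups. For each pair $1\le i<j\le n$ with $a_i\ne a_j$, I will introduce $k_{ij}(\theta)\in K$ acting as the 2x2 rotation $R(\theta)$ in the $(i,j)$-coordinate plane and as the identity on the other coordinates. Since $a$ is diagonal, a direct block computation will show that $k_{ij}(\theta)^*ak_{ij}(\theta)$ coincides with $a$ outside the $(i,j)$-positions and carries $R(\theta)^*\mathrm{diag}(a_i,a_j)R(\theta)$ there. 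Consequently $\ln\Delta_r(k_{ij}(\theta)^*ak_{ij}(\theta))$ will be constant in $\theta$ for $r<i$ or $r\ge j$, while for $i\le r<j$ it will equal $\ln(a_i\cos^2\theta+a_j\sin^2\theta)$ plus a $\theta$-independent term.

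Substituting this into the relation and isolating the $\theta$-dependent part will force $\sum_{r=i}^{j-1}c_r=0$ for every pair $i<j$ with $a_i\ne a_j$. I will then rearrange indices so that $a_1\ge a_2\ge\cdots\ge a_n$ (harmless, since it amounts to composing every $f_r$ with a fixed left-translation of $K$) and deduce $c_r=0$ for each $r\in\{1,\ldots,n-1\}$ by a two-case split. If $a_r>a_{r+1}$, the pair $(i,j)=(r,r+1)$ gives $c_r=0$ at once. Otherwise $a_r=a_{r+1}$, and since the $a_l$'s are not all equal there must be a strict drop elsewhere in the sorted sequence; I will locate either some $p\le r$ with $a_p>a_r$ or some $q\ge r+1$ with $a_q<a_{r+1}$, and subtract the two constraints for $(p,r),(p,r+1)$ (respectively $(r,q),(r+1,q)$) to isolate $c_r$. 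Once every $c_r$ vanishes, substitution in the original relation gives $c_0=0$.

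The role of Lemma \ref{det-rechnung} in this plan is complementary. Evaluating $\{1,f_1,\ldots,f_{n-1}\}$ at the transpositions $\tau_j=(1\,j)\in S_n\subset K$ (with $\tau_1=e$) produces essentially the matrix of that lemma with $x_i=\ln a_i$, and a single row operation yields a determinant $\pm\prod_{j\ge 2}(x_1-x_j)$. This instantly settles the case where some $a_s$ has multiplicity one in $\{a_1,\ldots,a_n\}$ (permute it to position $1$). The main obstacle, which forces the continuous 2x2-rotation argument above, will be the residual case where every distinct value among the $a_i$'s occurs at least twice, e.g.\ $a=(1,1,2,2)$: there the Vandermonde-type determinant of Lemma \ref{det-rechnung} vanishes under every reordering of the coordinates, so permutation test points alone do not suffice.
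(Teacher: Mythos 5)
Your proposal is correct, and it takes a genuinely different route from the paper. The paper's proof evaluates the functions only at the $n$ transpositions $k_j=(1\,j)\in S_n\subset K$, observes that the resulting $n\times n$ value matrix is exactly the one in Lemma \ref{det-rechnung} with $x_l=\ln a_l$, and concludes nonvanishing of its determinant $(x_1+\cdots+x_n)\prod_{j\ge2}(x_1-x_j)$ after the reduction ``w.l.o.g.\ $a_1$ is different from $a_2,\ldots,a_n$'' (and replacing $a$ by $2a$ to handle $\det a=1$). Your one-parameter rotation argument instead produces, for each pair $i<j$ with $a_i\ne a_j$, the linear constraint $\sum_{r=i}^{j-1}c_r=0$ from the non-constancy of $\theta\mapsto\ln(a_i\cos^2\theta+a_j\sin^2\theta)$; the block computation behind this (the minor is $\theta$-independent for $r<i$ and for $r\ge j$, since in the latter case only $\det R(\theta)^*\mathrm{diag}(a_i,a_j)R(\theta)=a_ia_j$ enters) is correct, and your two-case extraction of each individual $c_r$ from these telescoping constraints is sound.

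What your approach buys is significant: the paper's ``w.l.o.g.'' cannot actually be arranged when every distinct value among the $a_i$ occurs with multiplicity at least two (your example $\mathrm{diag}(1,1,2,2)$), since then $x_1=x_j$ for some $j\ge2$ under every reordering and the determinant of Lemma \ref{det-rechnung} vanishes. So the published proof, as written, does not cover that case, whereas your argument does; this matters because the corollary is invoked in Lemma \ref{posdef-moment-function}(2) precisely under the hypothesis that $gg^*$ is not a scalar matrix, which includes such spectra. One small overstatement: it is not true that ``permutation test points alone do not suffice'' in the residual case --- for $\mathrm{diag}(1,1,2,2)$ the six arrangements of the eigenvalues already yield value vectors $(1,f_1,f_2,f_3)$ spanning $\mathbb{R}^4$ --- it is only the specific family of transpositions $(1\,j)$ tied to Lemma \ref{det-rechnung} that fails. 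This does not affect the validity of your proof.
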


\begin{proof}
Without loss of generality, $a_1$ is different from $a_2,\ldots,a_n$. Now consider the $n$
permutation matrices $k_l$ which permute the rows $1$ and $l$ and leave the other rows invariant for $l=1,\cdots,n$.
Then, using the notation $x_l:=\ln a_l$, the number
$\ln\Delta_r(k_j^*ak_j)$ is precisely the $r,l$-entry of the matrix in Lemma \ref{det-rechnung}.
Therefore, by Lemma \ref{det-rechnung}, $\det((\ln\Delta_r(k_j^*ak_j))_{r,j=1,\ldots,n})\ne0$
 for $x_1+\ldots +x_n\ne0$, i.e., $a$ with $\det a\neq 1$. As $\ln\Delta_n(k^*ak)$ is constant,
this proves the statement of the corollary for 
$\det a\neq1$. The case $\det a=1$ can be easily derived by 
considering $2a$ instead of $a$ in the preceding argument.
\end{proof}

\end{document}